\numberwithin{equation}{section}
\newtheoremstyle{customthm}
{1em}                    
{1em}                    
{\itshape}               
{}                       
{\scshape}               
{.}                      
{5pt plus 1pt minus 1pt} 
{}                       
\newtheoremstyle{customrem}
{1em}                    
{1em}                    
{}                       
{}                       
{\scshape}               
{.}                      
{5pt plus 1pt minus 1pt} 
{}                       
\theoremstyle{customthm}
\newtheorem{X}{X}[section]
\newtheorem{theorem}[X]{Theorem}
\newtheorem{lemma}[X]{Lemma}
\newtheorem{corollary}[X]{Corollary}
\theoremstyle{customrem}
\renewcommand{\ge}{\ensuremath{\geqslant}}
\renewcommand{\pod}[1]{\mathchoice
	{\allowbreak \if@display \mkern 5mu\else \mkern 5mu\fi (#1)}
	{\allowbreak \if@display \mkern 5mu\else \mkern 5mu\fi (#1)}
	{\mkern4mu(#1)}
	{\mkern4mu(#1)}
}
\DeclareSymbolFont{EUEX}{U}{euex}{m}{n}
\DeclareSymbolFont{euexlargesymbols}{U}{euex}{m}{n}
\DeclareMathSymbol{\intop}{\mathop}{euexlargesymbols}{"52}
\def\int{\intop\nolimits}
\DeclareSymbolFont{euexsymbols}     {U}{euex}{m}{n}
\DeclareMathSymbol{\smallint}{\mathop}{euexsymbols}{"52}
\def\sums{
	\@ifnextchar[
	{\sums@i}
	{\ensuremath{\sum}}
}
\def\sums@i[#1]{
	\@ifnextchar[
	{\sums@ii{#1}}
	{\ensuremath{\sum_{#1}}}
}
\def\sums@ii#1[#2]{
	\@ifnextchar[
	{\sums@iii{#1}{#2}}
	{\ensuremath{\sum_{\substack{#1 \\ #2}}}}
}
\def\sums@iii#1#2[#3]{
	\@ifnextchar[
	{\sums@iv{#1}{#2}{#3}}
	{\ensuremath{\sum_{\substack{#1 \\ #2 \\ #3}}}}
}
\def\sums@iv#1#2#3[#4]{
	\@ifnextchar[
	{\sums@v{#1}{#2}{#3}{#4}}
	{\ensuremath{\sum_{\substack{#1 \\ #2 \\ #3 \\ #4}}}}
}
\def\sums@v#1#2#3#4[#5]{
	{\ensuremath{\sum_{\substack{#1 \\ #2 \\ #3 \\ #4 \\ #5}}}}
}
\def\sumss[#1]{
	\@ifnextchar[
	{\sumss@i[#1]}
	{
		\ifthenelse{\isempty{#1}}
		{\ensuremath{\sum}}
		{
			\ifthenelse{\equal{#1}{'}}
			{\ensuremath{\sideset{}{^{\prime}}{\sum}}}
			{\ensuremath{\sideset{}{^{#1}}{\sum}}}
		}
	}
}
\def\sumss@i[#1][#2]{
	\@ifnextchar[
	{\sumss@ii[#1]{#2}}
	{
		\ifthenelse{\isempty{#1}}
		{\ensuremath{\sum_{#2}}}
		{
			\ifthenelse{\equal{#1}{'}}
			{\ensuremath{\sideset{}{^{\prime}}{\sum}_{#2}}}
			{\ensuremath{\sideset{}{^{#1}}{\sum}_{#2}}}
		}
	}
}
\def\sumss@ii[#1]#2[#3]{
	\@ifnextchar[
	{\sumss@iii[#1]{#2}{#3}}
	{
		\ifthenelse{\isempty{#1}}
		{\ensuremath{\sum_{\substack{#2 \\ #3}}}}
		{
			\ifthenelse{\equal{#1}{'}}
			{\ensuremath{\sideset{}{^{\prime}}{\sum}_{\substack{#2 \\ #3}}}}
			{\ensuremath{\sideset{}{^{#1}}{\sum}_{\substack{#2 \\ #3}}}}
		}
	}
}
\def\sumss@iii[#1]#2#3[#4]{
	\@ifnextchar[
	{\sumss@iv[#1]{#2}{#3}{#4}}
	{
		\ifthenelse{\isempty{#1}}
		{\ensuremath{\sum_{\substack{#2 \\ #3 \\ #4}}}}
		{
			\ifthenelse{\equal{#1}{'}}
			{\ensuremath{\sideset{}{^{\prime}}{\sum}_{\substack{#2 \\ #3 \\ #4}}}}
			{\ensuremath{\sideset{}{^{#1}}{\sum}_{\substack{#2 \\ #3 \\ #4}}}}
		}
	}
}
\def\sumss@iv[#1]#2#3#4[#5]{
	\@ifnextchar[
	{\sumss@v[#1]{#2}{#3}{#4}{#5}}
	{
		\ifthenelse{\isempty{#1}}
		{\ensuremath{\sum_{\substack{#2 \\ #3 \\ #4 \\ #5}}}}
		{
			\ifthenelse{\equal{#1}{'}}
			{\ensuremath{\sideset{}{^{\prime}}{\sum}_{\substack{#2 \\ #3 \\ #4 \\ #5}}}}
			{\ensuremath{\sideset{}{^{#1}}{\sum}_{\substack{#2 \\ #3 \\ #4 \\ #5}}}}
		}
	}
}
\def\sumss@v[#1]#2#3#4#5[#6]{
	{\ifthenelse{\isempty{#1}}
		{\ensuremath{\sum_{\substack{#2 \\ #3 \\ #4 \\ #5 \\ #6 }}}}
		{
			\ifthenelse{\equal{#1}{'}}
			{\ensuremath{\sideset{}{^{\prime}}{\sum}_{\substack{#2 \\ #3 \\ #4 \\ #5 \\ #6 }}}}
			{\ensuremath{\sideset{}{^{#1}}{\sum}_{\substack{#2 \\ #3 \\ #4 \\ #5 \\ #6 }}}}
		}
	}
}
\def\sumsstxt[#1]{
	\@ifnextchar[
	{\sumsstxt@i[#1]}
	{
		\ifthenelse{\isempty{#1}}
		{\ensuremath{\textstyle\sum}}
		{
			\ifthenelse{\equal{#1}{'}}
			{\ensuremath{\sideset{}{^{\prime}}{\textstyle\sum}}}
			{\ensuremath{\sideset{}{^{#1}}{\textstyle\sum}}}
		}
	}
}
\def\sumsstxt@i[#1][#2]{
	\@ifnextchar[
	{\sumsstxt@ii[#1]{#2}}
	{
		\ifthenelse{\isempty{#1}}
		{\ensuremath{\textstyle\sum_{#2}}}
		{
			\ifthenelse{\equal{#1}{'}}
			{\ensuremath{\sideset{}{^{\prime}}{\textstyle\sum}_{#2}}}
			{\ensuremath{\sideset{}{^{#1}}{\textstyle\sum}_{#2}}}
		}
	}
}
\def\sumsstxt@ii[#1]#2[#3]{
	\@ifnextchar[
	{\sumsstxt@iii[#1]{#2}{#3}}
	{
		\ifthenelse{\isempty{#1}}
		{\ensuremath{\textstyle\sum_{\substack{#2 \\ #3}}}}
		{
			\ifthenelse{\equal{#1}{'}}
			{\ensuremath{\sideset{}{^{\prime}}{\textstyle\sum}_{\substack{#2 \\ #3}}}}
			{\ensuremath{\sideset{}{^{#1}}{\textstyle\sum}_{\substack{#2 \\ #3}}}}
		}
	}
}
\def\sumsstxt@iii[#1]#2#3[#4]{
	\@ifnextchar[
	{\sumsstxt@iv[#1]{#2}{#3}{#4}}
	{
		\ifthenelse{\isempty{#1}}
		{\ensuremath{\textstyle\sum_{\substack{#2 \\ #3 \\ #4}}}}
		{
			\ifthenelse{\equal{#1}{'}}
			{\ensuremath{\sideset{}{^{\prime}}{\textstyle\sum}_{\substack{#2 \\ #3 \\ #4}}}}
			{\ensuremath{\sideset{}{^{#1}}{\textstyle\sum}_{\substack{#2 \\ #3 \\ #4}}}}
		}
	}
}
\def\sumsstxt@iv[#1]#2#3#4[#5]{
	{\ifthenelse{\isempty{#1}}
		{\ensuremath{\textstyle\sum_{\substack{#2 \\ #3 \\ #4 \\ #5}}}}
		{
			\ifthenelse{\equal{#1}{'}}
			{\ensuremath{\sideset{}{^{\prime}}{\textstyle\sum}_{\substack{#2 \\ #3 \\ #4 \\ #5}}}}
			{\ensuremath{\sideset{}{^{#1}}{\textstyle\sum}_{\substack{#2 \\ #3 \\ #4 \\ #5}}}}
		}
	}
}
\def\prods{
	\@ifnextchar[
	{\prods@i}
	{\ensuremath{\prod}}
}
\def\prods@i[#1]{
	\@ifnextchar[
	{\prods@ii{#1}}
	{\ensuremath{\prod_{#1}}}
}
\def\prods@ii#1[#2]{
	\@ifnextchar[
	{\prods@iii{#1}{#2}}
	{\ensuremath{\prod_{\substack{#1 \\ #2}}}}
}
\def\prods@iii#1#2[#3]{
	\@ifnextchar[
	{\prods@iv{#1}{#2}{#3}}
	{\ensuremath{\prod_{\substack{#1 \\ #2 \\ #3}}}}
}
\def\prods@iv#1#2#3[#4]{
	{\ensuremath{\prod_{\substack{#1 \\ #2 \\ #3 \\ #4}}}}
}
\newcommand{\RNum}[1]{\uppercase\expandafter{\romannumeral #1\relax}}
\title[A generalization of  Piatetski--Shapiro sequences (II)]
      {A generalization of  Piatetski--Shapiro sequences (II)}
\author[Jinjiang Li]{Jinjiang Li}
\address{Department of Mathematics, China University of Mining and Technology, Beijing, 100083, People's Republic of China}
\email{jinjiang.li.math@gmail.com}
\author[Jinyun Qi]{Jinyun Qi}
\address{Department of Mathematics, Northwest University, Xi'an, Shaanxi, 710127, People's Republic of China}
\email{xjqi@stumail.nwu.edu.cn}
\author[Min Zhang]{Min Zhang}
\address{School of Applied Science, Beijing Information Science and Technology University, Beijing, 100192,
         People's Republic of China}
\email{min.zhang.math@gmail.com}
\begin{document}

\footnotetext[1]{Jinyun Qi is the corresponding author.}

\begin{abstract}
Suppose that $\alpha,\beta\in\mathbb{R}$. Let $\alpha\geqslant1$ and $c$ be a real number in the range
$1<c< 12/11$. In this paper, it is proved that there exist infinitely many primes in the generalized Piatetski--Shapiro sequence, which is defined by $(\lfloor\alpha n^c+\beta\rfloor)_{n=1}^\infty$. Moreover, we also prove that there exist infinitely many Carmichael numbers composed entirely of primes from the generalized Piatetski--Shapiro sequences with $c\in(1,\frac{19137}{18746})$. The two theorems constitute improvements upon previous results by Guo and Qi \cite{Guo-Qi-2021}.
\end{abstract}

\maketitle

{\textbf{Keywords}}: Beatty sequence; Piatetski--Shapiro sequences; arithmetic progression; exponential sums

{\textbf{MR(2020) Subject Classification}}: 11N05, 11L07, 11N80, 11B83



\section{Introduction}

For $1<c\not\in\mathbb{N}$, the Piatetski--Shapiro sequences are sequences of the form
\begin{equation*}
\mathscr{N}^{(c)}:=(\lfloor{n^c}\rfloor)_{n=1}^\infty.
\end{equation*}
For fixed real numbers $\alpha$ and $\beta$, the associated non--homogeneous Beatty sequence is the sequence of integers defined by
\begin{equation*}
\mathcal{B}_{\alpha,\beta}:=\big(\lfloor\alpha n+\beta\rfloor\big)_{n=1}^\infty,
\end{equation*}
where $\lfloor t\rfloor$ denotes the integral part of any $t\in\mathbb{R}$. Such sequences are also called generalized arithmetic progressions.
 Let $\alpha\geqslant1$ and $\beta$ be real numbers. Denote $\mathscr{N}_{\alpha,\beta}^{(c)}$ by the generalized Piatetski--Shapiro sequences
\begin{equation*}
\mathscr{N}_{\alpha,\beta}^{(c)}=\big(\lfloor\alpha n^c+\beta\rfloor\big)_{n=1}^{\infty}.
\end{equation*}
Note that the special case $\mathscr{N}_{1,0}^{(c)}$ is the classical Piatetski--Shapiro sequences. Let
\begin{equation*}
\pi(x;q,a):=\#\big\{p\leqslant x:p\equiv a\pmod q\big\}
\end{equation*}
and
\begin{equation*}
\pi_{\alpha,\beta,c}(x;q,a):=\#\big\{p\leqslant x: p \in \mathscr{N}_{\alpha,\beta}^{(c)}
\,\,\text{and}\,\, p\equiv a\pmod q\big\}.
\end{equation*}
Recently, Guo and Qi \cite{Guo-Qi-2021} gave an asymptotic formula of $\pi_{\alpha,\beta,c}(x;q,a)$ for $1<c<\frac{14}{13}$. Moreover, they also proved that there exist infinitely many Carmichael numbers composed entirely of primes from generalized Piatetski--Shapiro sequences with $1<c<\frac{64}{63}$.
In this paper, we shall continue to improve the range of $c$ in these problems and establish the following two results by improving the estimate of the weighted exponential sum
\begin{equation*}
	\sum_{1\leqslant h\leqslant H}\Bigg|\sum_{x/2<n\leqslant x}\Lambda(n)\textbf{e}(\theta hn^\gamma + \Xi n)\Bigg|,
\end{equation*}
where $H,\theta,\gamma, \Xi$ are positive numbers satisfying $H\geqslant1$ and $0<\theta, \gamma, \Xi<1$.

\begin{theorem}\label{thm:main}
Let $a$ and $q$ be coprime integers with $q\geqslant1$. For fixed $1<c<\frac{12}{11}$ and $\gamma=c^{-1}$, we have
\begin{align}\label{Thm-1-asymp}
        \pi_{\alpha,\beta,c}(x;q,a)
= &\,\, \alpha^{-\gamma}\gamma x^{\gamma-1}\pi(x;q,a)
                 \nonumber \\
  &\,\, + \alpha^{-\gamma}\gamma(1-\gamma)\int_2^xu^{\gamma-2}\pi(u;q,a)\mathrm{d}u
          +O\big(x^{7\gamma/13+11/26+\varepsilon}\big).
\end{align}
\end{theorem}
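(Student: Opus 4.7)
The plan follows the standard Piatetski--Shapiro template; the gain over \cite{Guo-Qi-2021} is to be isolated in a sharper treatment of the weighted Type~II exponential sum highlighted in the introduction. Set $\gamma=1/c$ and $F(u)=((u-\beta)/\alpha)^{\gamma}$. For $m$ of size $x$ the interval $[F(m),F(m+1))$ has length $\ll x^{\gamma-1}<1$, so counting integers in it yields
\begin{equation*}
\ind{m\in\mathscr{N}_{\alpha,\beta}^{(c)}}=\lceil F(m+1)\rceil-\lceil F(m)\rceil=\bigl(F(m+1)-F(m)\bigr)+\bigl(\psi(-F(m+1))-\psi(-F(m))\bigr),
\end{equation*}
where $\psi(t)=\{t\}-1/2$. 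Restricting to primes $p\leqslant x$ with $p\equiv a\pmod q$ and using $F(p+1)-F(p)=\gamma\alpha^{-\gamma}p^{\gamma-1}+O(p^{\gamma-2})$, followed by partial summation applied to $\pi(\,\cdot\,;q,a)$, delivers exactly the two main terms in \eqref{Thm-1-asymp} with an admissible error.

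The task then reduces to bounding the sawtooth contribution
\begin{equation*}
R(x):=\sum_{\substack{p\leqslant x\\ p\equiv a\,(q)}}\bigl(\psi(-F(p+1))-\psi(-F(p))\bigr)
\end{equation*}
by $O\bigl(x^{7\gamma/13+11/26+\varepsilon}\bigr)$. I would replace $\psi$ by its Vaaler approximation truncated at level $H$, detect the residue class by $\ind{n\equiv a\,(q)}=q^{-1}\sum_{r\bmod q}\e((n-a)r/q)$, and dyadically decompose in $p$. After inserting the von Mangoldt weight (the prime--power contribution being trivially absorbed), the problem collapses, uniformly in $r\bmod q$, to estimating
\begin{equation*}
T=\sum_{1\leqslant h\leqslant H}\Bigg|\sum_{x/2<n\leqslant x}\Lambda(n)\e(\theta hn^{\gamma}+\Xi n)\Bigg|,
\end{equation*}
with $\theta$ a constant multiple of $\alpha^{-\gamma}$ and $\Xi=r/q$, which is exactly the sum advertised in the introduction.

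The sum $T$ I would attack by a Heath--Brown identity, decomposing $\Lambda(n)$ into $O(\log^{A}x)$ Type~I sums (with one smooth long variable) and Type~II bilinear sums $\sum_{m\sim M,\,n\sim N}a_mb_n\e(\theta hm^{\gamma}n^{\gamma}+\Xi mn)$ with $M,N$ in intermediate ranges. Type~I pieces submit to partial summation and the $k$-th derivative test; Type~II pieces I would handle by Cauchy--Schwarz followed by several iterations of van der Corput's $A$-- and $B$--processes, the additive twist $\e(\Xi n)$ being harmless since it disappears under differencing. Optimising simultaneously the Heath--Brown cut--off, the exponent pair used in the final derivative test, and the truncation $H$ is designed to yield $T\ll x^{7\gamma/13+11/26+\varepsilon}$; the inequality $7\gamma/13+11/26<\gamma\Longleftrightarrow\gamma>11/12$ is precisely the constraint $c<12/11$. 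The main obstacle will be this final optimisation: balancing the dyadic ranges $M,N$, the number of van der Corput iterations, and the dependence on $h$ and $H$ so as to reach the stated exponent is where the improvement over the $c<14/13$ range of \cite{Guo-Qi-2021} has to be extracted.
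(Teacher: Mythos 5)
Your skeleton coincides with the paper's: the same splitting of the indicator into a differentiable main term plus a difference of sawtooth values, partial summation for the main terms, Vaaler's approximation truncated at $H$, orthogonality in the residue class, insertion of $\Lambda$, and Heath--Brown's identity feeding Type~I/Type~II sums treated by van der Corput technology. So there is no divergence of method to discuss.

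The genuine gap is that the step which actually proves the theorem --- the one you defer as ``the main obstacle'' --- is never carried out, and the exponent $7\gamma/13+11/26$ cannot be certified from what you have written. To close it you need the following concrete choices, which are where the improvement over $c<14/13$ lives. First, Heath--Brown with $k=3$ and a splitting parameter $x^{6/25}$: every sum is either Type~I with the coefficient block $K\ll x^{1/2}$, or Type~II with $x^{1/2}\ll K\ll x^{19/25}$ (the upper limit $x^{19/25}=x^{1-6/25}$ comes from grouping the short variables until their product first exceeds $x^{6/25}$, which keeps it below $x^{12/25}<x^{1/2}$). Second, the Type~I pieces are \emph{not} handled by a generic $k$-th derivative test but specifically by the third-derivative estimate of Sargos, giving $S_I\ll H^{7/6}x^{\gamma/6+3/4}+H^{2/3}x^{1-\gamma/3}$; the Type~II pieces use a \emph{single} Weyl--van der Corput differencing (shift parameter $Q$ optimised by Srinivasan's lemma) followed by the second-derivative test, giving $S_{II}\ll H^{5/4}x^{\gamma/4+5/8}+H^{3/4}x^{1-\gamma/4}+Hx^{22/25}+H^{7/6}x^{\gamma/6+3/4}$ --- ``several iterations of the $A$- and $B$-processes'' is not what is needed and would not obviously reproduce these bounds. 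Third, the bookkeeping of the $h$-dependence matters: the $a_h\ll|h|^{-1}$ coefficients are offset by the factor $\phi_h(t)\ll|h|t^{\gamma-1}$ coming from differencing $\psi(-F(n+1))-\psi(-F(n))$, so the main contribution carries an extra $x^{\gamma-1}$, while the Vaaler error term contributes $xH^{-1}$ from $h=0$. The binding balance is then $H^{7/6}x^{\gamma-1}\cdot x^{\gamma/6+3/4}$ against $xH^{-1}$, and Srinivasan's lemma in $H$ yields exactly $x^{7\gamma/13+11/26}$; only after this computation does the constraint that all surviving exponents be $<\gamma$ reduce to $\gamma>11/12$. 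Without these specific estimates and the explicit optimisation, the claimed error term is an announcement rather than a proof.
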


\noindent
Moreover, define
\begin{equation*}
\pi_{\alpha,\beta,c}(x):=\pi_{\alpha,\beta,c}(x;1,1)=\#\big\{p\leqslant x: p\in\mathscr{N}_{\alpha,\beta}^{(c)}\big\}.
\end{equation*}
Then we conclude that
\begin{corollary}
Suppose that $\alpha \ge 1$ and $\beta$ are real numbers. Then for $1<c<\frac{12}{11}$, there holds
\begin{equation}\label{Coro-1}
\pi_{\alpha,\beta,c}(x)=\frac{x^\gamma}{\alpha^\gamma\log x}+O\bigg(\frac{x^\gamma}{\log^2x}\bigg).
\end{equation}
\end{corollary}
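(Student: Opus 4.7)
The plan is to deduce the corollary directly from Theorem~\ref{thm:main} by specializing to $q=1$ and $a=1$, in which case the congruence condition is vacuous and $\pi(x;1,1)=\pi(x)$ is the ordinary prime counting function. Substituting into \eqref{Thm-1-asymp} I obtain
$$\pi_{\alpha,\beta,c}(x)=\alpha^{-\gamma}\gamma x^{\gamma-1}\pi(x)+\alpha^{-\gamma}\gamma(1-\gamma)\int_2^x u^{\gamma-2}\pi(u)\dd u+O\bigl(x^{7\gamma/13+11/26+\varepsilon}\bigr),$$
and I propose to evaluate each piece via the Prime Number Theorem in the quantitative form $\pi(t)=t/\log t+O(t/\log^2 t)$.

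For the first main term, multiplication by $\alpha^{-\gamma}\gamma x^{\gamma-1}$ gives immediately
$$\alpha^{-\gamma}\gamma x^{\gamma-1}\pi(x)=\frac{\gamma x^\gamma}{\alpha^\gamma\log x}+O\!\left(\frac{x^\gamma}{\log^2 x}\right).$$
For the integral term, I would insert the PNT asymptotic and evaluate $\int_2^x u^{\gamma-1}/\log u\,\dd u$ by integration by parts, writing $u^{\gamma-1}\dd u=\dd(u^\gamma/\gamma)$ to obtain $x^\gamma/(\gamma\log x)+O(x^\gamma/\log^2 x)$. The remainder integral $\int_2^x u^{\gamma-1}/\log^2 u\,\dd u$ is $O(x^\gamma/\log^2 x)$ after a standard split at $u=\sqrt{x}$. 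Multiplying by $\alpha^{-\gamma}\gamma(1-\gamma)$ therefore produces
$$\alpha^{-\gamma}\gamma(1-\gamma)\int_2^x u^{\gamma-2}\pi(u)\dd u=\frac{(1-\gamma)x^\gamma}{\alpha^\gamma\log x}+O\!\left(\frac{x^\gamma}{\log^2 x}\right).$$
Adding the two main contributions, the coefficients $\gamma$ and $1-\gamma$ sum to $1$ and yield the claimed main term $x^\gamma/(\alpha^\gamma\log x)$.

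The one remaining point is to check that the exponential error term of Theorem~\ref{thm:main} is absorbed into $O(x^\gamma/\log^2 x)$. Solving $7\gamma/13+11/26<\gamma$ gives $\gamma>11/12$, equivalently $c<12/11$, which is precisely the hypothesis of the corollary. Hence for sufficiently small $\varepsilon>0$ there exists $\delta>0$ with $x^{7\gamma/13+11/26+\varepsilon}\ll x^{\gamma-\delta}$, which is easily absorbed into $x^\gamma/\log^2 x$. No substantive obstacle is anticipated: the corollary is a clean bookkeeping consequence of Theorem~\ref{thm:main} combined with the Prime Number Theorem, and the only point requiring any care is the verification that the admissible range of $c$ in Theorem~\ref{thm:main} is exactly what is needed to swallow the error in the corollary.
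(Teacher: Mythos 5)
Your derivation is correct and is exactly the intended argument: the paper states the corollary without proof as an immediate consequence of Theorem~\ref{thm:main} with $q=a=1$, the Prime Number Theorem, and partial integration of $\int_2^x u^{\gamma-1}/\log u\,\mathrm{d}u$, with the exponential error absorbed precisely because $7\gamma/13+11/26<\gamma$ is equivalent to $c<12/11$. Nothing to add.
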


In the end, we improve the theorem in~\cite{Guo-Qi-2021} related to Carmichael numbers, which are the composite natural numbers $N$ with the property that $N | (a^N - a)$ for every integer $a$.

\begin{theorem}\label{thm:2}
For every $c \in (1, \frac{19137}{18746})$, there are infinitely many Carmichael numbers composed entirely of the primes from the set $\mathscr{N}_{\alpha,\beta}^{(c)}$.
\end{theorem}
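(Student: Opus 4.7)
The plan is to combine the asymptotic from Theorem \ref{thm:main} (which supplies a level of distribution for primes in $\mathscr{N}^{(c)}_{\alpha,\beta}$ along arithmetic progressions) with the Alford--Granville--Pomerance (AGP) construction of Carmichael numbers, adapted to thin prime sets as in the works of Banks--Pomerance, Baker--Banks, and Guo--Qi \cite{Guo-Qi-2021}.

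\textbf{Level of distribution.} First I would observe that the main term in \eqref{Thm-1-asymp} has size $\asymp x^{\gamma}/(q\log x)$, so the error $O(x^{7\gamma/13+11/26+\varepsilon})$ leaves the asymptotic non-trivial uniformly for $q\le x^{\eta-\varepsilon}$, where
\[
\eta \;=\; \frac{6\gamma}{13}-\frac{11}{26}.
\]
This exponent is positive precisely when $c<12/11$, and it furnishes a Bombieri--Vinogradov-style input for $\pi_{\alpha,\beta,c}(x;q,1)$ on moduli up to $x^\eta$.

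\textbf{The AGP construction, adapted.} Next I would follow the scheme of Guo--Qi \cite{Guo-Qi-2021}. Choose parameters $x$, $y$, $z$ and a squarefree ``seed'' integer $L$ (a product of small primes from a suitable auxiliary set) such that $L$ has many divisors in the dyadic range $[y,2y]$ with $y\le x^\eta$. Using the level-of-distribution bound above, the goal is to show that for a positive proportion of divisors $d\mid L$ in that range there are many primes $p\le x$ with $p\in\mathscr{N}^{(c)}_{\alpha,\beta}$ and $p\equiv 1\pmod d$. The AGP subset-sum / character-sum lemma inside $(\mathbb{Z}/L\mathbb{Z})^\ast$ then supplies a subset whose product $N=p_1\cdots p_k$ satisfies $p_i-1\mid N-1$ for every $i$; by Korselt's criterion, $N$ is a Carmichael number whose prime factors all lie in $\mathscr{N}^{(c)}_{\alpha,\beta}$. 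Letting $x\to\infty$ with the auxiliary parameters tracked then yields infinitely many such $N$.

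\textbf{The bottleneck.} The hard part is the numerology, not any new conceptual ingredient: the AGP step demands that $y$ be above a certain threshold (to guarantee enough admissible divisors of $L$ and enough smoothness), while the level-of-distribution step forces $y\le x^\eta$. Inserting the sharper $\eta=6\gamma/13-11/26$ into the system of linear inequalities appearing in \cite{Guo-Qi-2021} and solving for $\gamma=1/c$ will produce the explicit threshold $c<19137/18746$; the fraction is just what emerges after clearing denominators in the binding inequality. The improvement over the previous range $c<64/63$ therefore comes entirely from the sharper exponential sum bound underlying Theorem \ref{thm:main}.
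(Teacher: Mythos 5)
Your proposal follows essentially the same route as the paper, which likewise just feeds the new level of distribution $\eta=\frac{6\gamma}{13}-\frac{11}{26}$ (coming from the error term in Theorem \ref{thm:main}, or rather its Chebyshev-function analogue) into the Alford--Granville--Pomerance machinery as implemented in \cite{Guo-Qi-2021}. The only ingredient you leave implicit is that the ``binding inequality'' is explicitly $\bigl(\frac{6\gamma}{13}-\frac{11}{26}\bigr)E+\gamma-1>0$, where $E=0.7039$ is the Baker--Harman exponent for shifted primes $p-1$ free of prime factors exceeding $x^{1-E}$; this is precisely where the threshold $c<\frac{19137}{18746}$ comes from.
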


\section{Preliminaries}

\subsection{Notation}

We denote by $\lfloor t\rfloor$ and $\{t\}$ the integral part and the fractional part of $t$, respectively.
As usual, we put
\begin{equation*}
\textbf{e}(t):= e^{2\pi it}.
\end{equation*}
Throughout this paper, we make considerable use of the sawtooth function, which is defined by
\begin{equation*}
\psi(t):=t-\lfloor t\rfloor-\frac{1}{2}=\{t\}-\frac{1}{2}.
\end{equation*}
The letter $p$ always denotes a prime. For the generalized Piatetski--Shapiro sequence
$(\lfloor\alpha n^c+\beta\rfloor)_{n=1}^\infty$, we denote $\gamma:=c^{-1}$ and $\theta:=\alpha^{-\gamma}$.
We use the notation of the form $m\sim M$ as an abbreviation for $M<m\leqslant 2M$.

Throughout the paper, implied constants in symbols $O$, $\ll$ and $\gg$ may depend (where obvious) on the parameters $\alpha,\beta, c, \varepsilon$ but are absolute otherwise. For given functions $F$ and $G$, the notations $F\ll G$, $G\gg F$ and $F=O(G)$ are all equivalent to the statement that the inequality $|F|\leqslant\mathcal{C}|G|$ holds with some constant $\mathcal{C}>0$.

\subsection{Technical lemmas}

We need the following well--known approximation of Vaaler~\cite{Vaal}.

\begin{lemma}\label{lem:Vaaler}
	For any $H\geqslant 1$, there exist numbers $a_h,b_h$ such that
\begin{equation*}
  \bigg|\psi(t)-\sum_{0<|h|\leqslant H}a_h\,\mathbf{e}(th)\bigg|
  \leqslant\sum_{|h|\leqslant H}b_h\,\mathbf{e}(th),\qquad
  a_h\ll\frac{1}{|h|},\qquad b_h\ll\frac{1}{H}.
\end{equation*}
\end{lemma}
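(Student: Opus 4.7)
The plan is to construct the approximating polynomial through the Beurling--Selberg majorant--minorant method rather than by directly truncating the Fourier series of $\psi$. Formally $\psi(t) \sim -\sum_{h\neq 0}\mathbf{e}(th)/(2\pi ih)$, but a raw truncation at $|h|\le H$ fails to give a pointwise bound near integers because of the Gibbs phenomenon. Instead, the idea is to produce trigonometric polynomial majorants $\psi^+$ and minorants $\psi^-$ of $\psi$, of degree at most $H$, whose $L^1$ gap on $\mathbb{R}/\mathbb{Z}$ is $O(1/H)$, and then set $V_H := (\psi^+ + \psi^-)/2$ as the approximating polynomial and $W_H := (\psi^+ - \psi^-)/2$ as the error majorant. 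This automatically yields the pointwise inequality $|\psi(t) - V_H(t)| \le W_H(t)$ everywhere on $\mathbb{R}/\mathbb{Z}$.

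To build $\psi^\pm$ I would start from Beurling's entire extremal function $B(z)$: the unique entire function of exponential type $2\pi$ that pointwise majorizes $\mathrm{sgn}(x)$ on $\mathbb{R}$ and minimises $\int_{\mathbb{R}}(B(x)-\mathrm{sgn}(x))\,\mathrm{d}x=1$, whose Fourier transform is supported in $[-1,1]$. Rescaling $z\mapsto Hz$ gives extremal majorants of exponential type $2\pi H$, and by combining suitable translates of these one assembles entire majorants $\Psi_H^\pm(t)$ of $\psi$ on $\mathbb{R}$, of exponential type $2\pi H$, with $\int_0^1(\Psi_H^+-\Psi_H^-)\,\mathrm{d}t \ll 1/H$. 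Poisson summation then finishes the job: since $\widehat{\Psi_H^\pm}$ vanishes outside $[-H,H]$, the $1$-periodisations of $\Psi_H^\pm$ are trigonometric polynomials of degree $\le H$ that still sandwich $\psi$. The coefficient bound $a_h \ll 1/|h|$ follows because the $h$th Fourier coefficient of $V_H$ must track that of $\psi$ itself, which equals $-1/(2\pi ih)$, while $b_h \ll 1/H$ follows from the positivity of $W_H$, since $|b_h|\le\widehat{W_H}(0) = \tfrac{1}{2}\int_0^1(\psi^+-\psi^-)\,\mathrm{d}t \ll 1/H$.

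The main obstacle is the construction and verification of Beurling's function itself: one must exhibit $B(z)$ explicitly, check that it pointwise dominates $\mathrm{sgn}$, determine its exponential type, and pin down the support of its Fourier transform through a Paley--Wiener argument. Vaaler accomplishes this through a closed-form expression of the shape
\[
B(z) = \left(\frac{\sin\pi z}{\pi}\right)^{\!2}\!\left(\sum_{n\ge 0}\frac{1}{(z-n)^2} - \sum_{n\ge 1}\frac{1}{(z+n)^2} + \frac{2}{z}\right),
\]
followed by direct verification of each property. Once $B(z)$ is in hand, the passage to the sawtooth, the Poisson periodisation, and the verification of the stated coefficient bounds reduce to essentially bookkeeping, and one expects the constants to come out as stated in the lemma.
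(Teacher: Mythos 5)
The paper offers no proof of this lemma beyond the citation to Vaaler, and your sketch is a faithful outline of exactly the argument in that source: the Beurling--Selberg extremal majorant/minorant construction, periodisation via Poisson summation since the Fourier transforms are supported in $[-H,H]$, and the coefficient bounds via $|b_h|\leqslant\widehat{W_H}(0)\ll 1/H$ (positivity of $W_H$) together with comparison of $\widehat{V_H}(h)$ against $-1/(2\pi i h)$. This is essentially the same approach as the paper's (cited) proof, so there is nothing further to add.
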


\begin{lemma}\label{Heath-Brown-identity}
	Let $z\geqslant1$ and $k\geqslant1$. Then, for any $n\leqslant2z^k$, there holds
	\begin{equation*}
		\Lambda(n)=\sum_{j=1}^k(-1)^{j-1}\binom{k}{j}\mathop{\sum\cdots\sum}_{\substack{n_1n_2\cdots n_{2j}=n\\
				n_{j+1},\dots,n_{2j}\leqslant z }}(\log n_1)\mu(n_{j+1})\cdots\mu(n_{2j}).
	\end{equation*}
\end{lemma}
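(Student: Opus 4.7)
\begin{sproof}
The plan is to derive this identity by comparing the coefficients of $n^{-s}$ in a formal Dirichlet series identity. I will start from the generating identity $-\zeta'(s)/\zeta(s) = \sum_{n \ge 1} \Lambda(n) n^{-s}$, valid for $\mathrm{Re}(s) > 1$, and introduce the truncated Mobius partial sum
\begin{equation*}
M_z(s) := \sum_{m \le z} \mu(m) m^{-s}.
\end{equation*}

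The first key observation is that $1 - M_z(s)\zeta(s)$ is a Dirichlet series whose coefficients are supported on integers $n > z$: the coefficient of $n^{-s}$ in $M_z(s)\zeta(s)$ is $\sum_{d\mid n,\, d \le z}\mu(d)$, which for $n \le z$ collapses to $\sum_{d\mid n}\mu(d) = [n=1]$. Consequently the $k$th Dirichlet convolution power $(1 - M_z(s)\zeta(s))^k$ is supported on $n > z^k$, since every integer appearing in its support is a product of $k$ integers each strictly exceeding $z$.

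Having established this support property, I would expand by the binomial theorem and rearrange to obtain
\begin{equation*}
1 - (1 - M_z\zeta)^k = \sum_{j=1}^k (-1)^{j-1}\binom{k}{j}(M_z\zeta)^j,
\end{equation*}
multiply through by $-\zeta'/\zeta$, and use $(-\zeta'/\zeta)(M_z\zeta)^j = (-\zeta')M_z^j\zeta^{j-1}$ to arrive at
\begin{equation*}
-\frac{\zeta'(s)}{\zeta(s)} + \frac{\zeta'(s)}{\zeta(s)}\big(1 - M_z(s)\zeta(s)\big)^k = \sum_{j=1}^k (-1)^{j-1}\binom{k}{j}\big(-\zeta'(s)\big)M_z(s)^j\zeta(s)^{j-1}.
\end{equation*}
Reading off the coefficient of $n^{-s}$ on the right is then direct: $-\zeta'(s)$ contributes a factor $\log n_1$, each copy of $\zeta(s)$ contributes an unrestricted variable $n_i$, and each copy of $M_z(s)$ contributes a Mobius-weighted variable $n_{j+\ell} \le z$. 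Summing over all factorizations $n_1 n_2 \cdots n_{2j} = n$ produces exactly the multi-sum in the statement.

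The only subtle point is justifying the upper range $n \le 2z^k$ rather than the cleaner $n \le z^k$ coming straight from the support of $(1 - M_z\zeta)^k$. The coefficient of $n^{-s}$ in the leftover term $(\zeta'/\zeta)(1 - M_z\zeta)^k$ is a Dirichlet convolution of the form $\sum_{d\mid n}(-\Lambda(d))\big[(1 - M_z\zeta)^k\big]_{n/d}$; nonvanishing of the second factor forces $n/d > z^k$, and for $n \le 2z^k$ this in turn forces $d = 1$, whereupon $\Lambda(1) = 0$ kills the contribution. I expect this small piece of boundary bookkeeping to be the only step requiring real care; everything else is routine manipulation of formal Dirichlet series followed by coefficient extraction.
\end{sproof}
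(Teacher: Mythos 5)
Your proposal is correct and is precisely the argument the paper points to: the cited pages of Heath--Brown's original article prove the identity exactly this way, by expanding $1-(1-M_z\zeta)^k$ binomially, extracting Dirichlet coefficients of $(-\zeta')M_z^j\zeta^{j-1}$, and observing that the remainder $(\zeta'/\zeta)(1-M_z\zeta)^k$ is supported on $n>2z^k$ because $\Lambda(1)=0$. Your handling of the boundary case $z^k<n\leqslant 2z^k$ is the same as the source's, so there is nothing to add.
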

\begin{proof}
	See the arguments on pp. 1366--1367 of Heath--Brown \cite{HB}.
\end{proof}

\begin{lemma}\label{compare}
	Suppose that
	\begin{equation*}
		L(H)=\sum_{i=1}^mA_iH^{a_i}+\sum_{j=1}^nB_jH^{-b_j},
	\end{equation*}
	where~$A_i,\,B_j,\,a_i$~and~$b_j$~are positive. Assume further that $H_1\leqslant H_2$. Then there exists
	some $\mathscr{H}$ with
	$H_1\leqslant\mathscr{H}\leqslant H_2$ and
	\begin{equation*}
		L(\mathscr{H})\ll \sum_{i=1}^mA_iH_1^{a_i}+\sum_{j=1}^nB_jH_2^{-b_j}+\sum_{i=1}^m\sum_{j=1}^n\big(A_i^{b_j}B_j^{a_i}\big)^{1/(a_i+b_j)}.
	\end{equation*}
	The implied constant depends only on $m$ and $n$.
\end{lemma}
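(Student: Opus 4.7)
The proof rests on the observation that, for each pair $(i,j)$, the elementary function $A_i H^{a_i} + B_j H^{-b_j}$ of a single variable is minimized at the balance point $H^*_{ij} := (B_j/A_i)^{1/(a_i+b_j)}$, where both summands equal the cross term $(A_i^{b_j} B_j^{a_i})^{1/(a_i+b_j)}$ appearing in the right-hand side. Accordingly, the strategy is to locate a single $\mathscr{H} \in [H_1, H_2]$ for which every $A_i \mathscr{H}^{a_i}$ is absorbed either by $A_i H_1^{a_i}$ (if $\mathscr{H}$ is close to $H_1$) or by a cross term $A_i(H^*_{ij})^{a_i}$ (if $\mathscr{H} \leq H^*_{ij}$ for some $j$), and, dually, every $B_j \mathscr{H}^{-b_j}$ is controlled either by $B_j H_2^{-b_j}$ or by $B_j(H^*_{ij})^{-b_j}$ for some $i$.

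Concretely, I would set
\[
\mathscr{L} := \max_j \min\bigl(H_2,\, \min_i H^*_{ij}\bigr), \qquad \mathscr{R} := \min_i \max\bigl(H_1,\, \max_j H^*_{ij}\bigr),
\]
and take $\mathscr{H}$ to be any point of $[\max(H_1,\mathscr{L}),\, \min(H_2,\mathscr{R})]$. A short min-max argument --- for fixed $i_0,j_0$ the chain $\min(H_2,\min_i H^*_{ij_0}) \leq H^*_{i_0 j_0} \leq \max(H_1,\max_j H^*_{i_0 j})$ yields $\mathscr{L} \leq \mathscr{R}$ after taking $\max_{j_0}$ followed by $\min_{i_0}$ --- together with the trivial inequalities $\mathscr{L} \leq H_2$, $\mathscr{R} \geq H_1$ and the hypothesis $H_1 \leq H_2$, guarantees that this selection interval is non-empty.

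With this choice, the inequality $\mathscr{H} \leq \max(H_1,\max_j H^*_{ij})$, valid for each $i$, combined with the elementary bound $\max(a,b)^{a_i} \leq a^{a_i} + b^{a_i}$, gives
\[
A_i \mathscr{H}^{a_i} \leq A_i H_1^{a_i} + \sum_{j=1}^n \bigl(A_i^{b_j} B_j^{a_i}\bigr)^{1/(a_i+b_j)},
\]
and the dual estimate $\mathscr{H} \geq \min(H_2,\min_i H^*_{ij})$ handles $B_j \mathscr{H}^{-b_j}$ symmetrically. Summing over $i$ and $j$ then recovers the claim, with implied constant depending only on $m$ and $n$. The main obstacle is precisely the simultaneous min-max juggling needed to construct $\mathscr{H}$: one cannot minimize each balanced pair $(A_i H^{a_i}, B_j H^{-b_j})$ in isolation, since a single parameter must serve all $m+n$ terms at once; once the non-emptyness of the selection interval is established, the remaining verification is purely algebraic.
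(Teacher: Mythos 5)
Your proof is correct. Note that the paper itself offers no argument for this lemma --- it simply cites Lemma~3 of Srinivasan \cite{Srin} --- so there is no internal proof to compare against; your write-up supplies the standard self-contained balancing argument (essentially the one in Graham--Kolesnik, Lemma~2.4). The details check out: at the balance point $H^*_{ij}=(B_j/A_i)^{1/(a_i+b_j)}$ one indeed has $A_i(H^*_{ij})^{a_i}=B_j(H^*_{ij})^{-b_j}=(A_i^{b_j}B_j^{a_i})^{1/(a_i+b_j)}$; your min--max chain correctly gives $\mathscr{L}\leqslant\mathscr{R}$, and together with $\mathscr{L}\leqslant H_2$, $\mathscr{R}\geqslant H_1$, $H_1\leqslant H_2$ this makes the selection interval $[\max(H_1,\mathscr{L}),\min(H_2,\mathscr{R})]\subseteq[H_1,H_2]$ non-empty; and the bounds $\mathscr{H}\leqslant\max(H_1,\max_j H^*_{ij})$ for every $i$ and $\mathscr{H}\geqslant\min(H_2,\min_i H^*_{ij})$ for every $j$, combined with $\max(u,v)^{a}\leqslant u^{a}+v^{a}$ for $a>0$, yield the claimed estimate with an absolute constant (at most $2$ in front of the cross terms, well within the allowed dependence on $m$ and $n$).
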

\begin{proof}
	See Lemma 3 of Srinivasan \cite{Srin}.
\end{proof}

For real numbers $\theta,\Xi\in[0,1]$, the sum of the form
\begin{equation*}
	\sum_{0<|h|\leqslant H}\Bigg|\mathop{\sum_{k\sim K}\sum_{\ell\sim L}}_{KL\asymp x}a_kb_\ell
    \mathbf{e} \big(\theta h(k\ell)^\gamma + \Xi k \ell\big)\Bigg|
\end{equation*}
with $|a_k|\ll x^\varepsilon,|b_\ell|\ll x^\varepsilon$ for every fixed $\varepsilon>0$, it is usually called a ``Type I'' sum, denoted by $S_I(K,L)$, if $b_\ell=1$ or $b_\ell=\log\ell$; otherwise it is called a ``Type II'' sum, denoted by $S_{II}(K,L)$.

\begin{lemma}\label{derivative-estimate}
	Suppose that $f(x):[a, b]\to \mathbb{R}$ has continuous derivatives of arbitrary
	order on $[a,b]$, where $1\leqslant a<b\leqslant2a$. Suppose further that
	\begin{equation*}
		\big|f^{(j)}(x)\big|\asymp \lambda_j,\qquad j\geqslant1, \qquad x\in[a, b].
	\end{equation*}
	Then we have
	\begin{equation}\label{2nd-deri-estimate}
		\sum_{a<n\leqslant b}e\big(f(n)\big)\ll a\lambda_2^{1/2}+\lambda_2^{-1/2},
	\end{equation}
	and
	\begin{equation}\label{3rd-deri-estimate}
		\sum_{a<n\leqslant b}e\big(f(n)\big)\ll a\lambda_3^{1/6}+\lambda_3^{-1/3}.
	\end{equation}
\end{lemma}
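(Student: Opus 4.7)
The plan is to prove the two bounds separately, with the third derivative estimate deduced from the second via a Weyl--van der Corput differencing argument. Both are classical exponential sum estimates of van der Corput type, and the key inputs are the Kusmin--Landau inequality and the differencing inequality.

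For \eqref{2nd-deri-estimate}, I would first reduce to an estimate controlled by the spacing of $f'(n)$ modulo $1$. Partition $[a,b]$ into maximal subintervals on which $f'(n)\bmod 1$ stays in a fixed ``side'' of $0$, and then into blocks according to the size of $\|f'(n)\|$ (distance to the nearest integer). Since $f''$ has constant sign on $[a,b]$ with $|f''|\asymp\lambda_2$, the map $n\mapsto f'(n)$ is monotone with spacing $\asymp\lambda_2$. On blocks where $\|f'(n)\|$ is bounded below, invoke Kusmin--Landau to bound $\sum e(f(n))\ll \|f'\|^{-1}$; on the few residual blocks where $\|f'\|$ is small, use the trivial estimate. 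Summing these contributions and using that the block lengths are $\ll\lambda_2^{-1/2}$ before the trivial bound is wasteful yields $\ll a\lambda_2^{1/2}+\lambda_2^{-1/2}$.

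For \eqref{3rd-deri-estimate}, I would apply the Weyl--van der Corput inequality: for any integer $H$ with $1\le H\le b-a$,
\[
\bigg|\sum_{a<n\le b}e(f(n))\bigg|^{2}\ll\frac{(b-a)^{2}}{H}+\frac{b-a}{H}\sum_{1\le h\le H}\bigg|\sum_{n}e\bigl(f(n+h)-f(n)\bigr)\bigg|.
\]
For each fixed $h$ the auxiliary function $f_{h}(x)=f(x+h)-f(x)$ satisfies $f_{h}''(x)\asymp h\lambda_{3}$ on the relevant interval by the mean value theorem applied to $f''$. Feeding this into \eqref{2nd-deri-estimate} with $\lambda_{2}\asymp h\lambda_{3}$ bounds each inner sum by $\ll a(h\lambda_{3})^{1/2}+(h\lambda_{3})^{-1/2}$. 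Summing over $h\le H$, taking square roots, and invoking Lemma~\ref{compare} to choose $H$ so as to balance the resulting terms $(b-a)H^{-1/2}$, $a(b-a)^{1/2}(H\lambda_{3})^{1/2}H^{-1/2}$ and $(b-a)^{1/2}\lambda_{3}^{-1/4}H^{-1/4}$ leads to the bound $\ll a\lambda_{3}^{1/6}+\lambda_{3}^{-1/3}$.

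The main technical point is the bookkeeping in the second step: one must confirm that the hypotheses $|f^{(j)}|\asymp\lambda_{j}$ transfer to $f_{h}$ (giving $|f_{h}''|\asymp h\lambda_{3}$ with uniform implied constants over $h\le H\le a$), and then extract the exponents $1/6$ and $-1/3$ cleanly from the optimization; the rest is routine application of the Kusmin--Landau method and Cauchy--Schwarz. Standard references (e.g.\ Graham--Kolesnik) carry out these steps in full detail, and the result can also be quoted directly; the argument above is the route I would follow if proving it from scratch.
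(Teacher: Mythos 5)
Your treatment of \eqref{2nd-deri-estimate} is the standard Kusmin--Landau argument and is fine; the paper itself does not prove either bound but simply cites Iwaniec--Kowalski (Cor.~8.13) for the second-derivative test and Sargos (Cor.~4.2) for the third-derivative test. The genuine problem is in your derivation of \eqref{3rd-deri-estimate}: the Weyl--van der Corput differencing you propose cannot produce the second term $\lambda_3^{-1/3}$. Writing $N=b-a\le a$, differencing with parameter $H$ and inserting the second-derivative test with $\lambda_2\asymp h\lambda_3$ gives
\begin{equation*}
\Big|\sum_{a<n\le b}e(f(n))\Big|^2\ll \frac{N^2}{H}+N^2H^{1/2}\lambda_3^{1/2}+NH^{-1/2}\lambda_3^{-1/2},
\end{equation*}
and the optimal choice $H\asymp\lambda_3^{-1/3}$ (the balance of the first two terms) yields
\begin{equation*}
\sum_{a<n\le b}e(f(n))\ll N\lambda_3^{1/6}+N^{1/2}\lambda_3^{-1/6},
\end{equation*}
which is the \emph{classical} van der Corput third-derivative test. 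The term $N^{1/2}\lambda_3^{-1/6}$ dominates $\lambda_3^{-1/3}$ exactly when $\lambda_3>N^{-3}$, which is the relevant range in this paper (no admissible $H$ fixes this: forcing the last term down to $\lambda_3^{-2/3}$ requires $H\asymp N^2\lambda_3^{1/3}$, which blows up the middle term). So your route proves a strictly weaker inequality than the one stated.

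The bound $a\lambda_3^{1/6}+\lambda_3^{-1/3}$ is Sargos' refinement (Corollary 4.2 of the cited paper), whose proof is not a two-line consequence of the $A$-process; it rests on a finer analysis (a first-derivative/spacing decomposition combined with estimates for integer points near a curve) that specifically removes the factor $N^{1/2}$ from the second term. The distinction is not cosmetic here: in Lemma~\ref{Type-I-sum} the authors keep precisely the contribution $\lambda_3^{-1/3}=(|h|K^{\gamma}L^{\gamma-3})^{-1/3}$, and replacing it by $L^{1/2}\lambda_3^{-1/6}$ would degrade the Type~I bound and hence the final exponent $7\gamma/13+11/26$. You should either quote Sargos directly for \eqref{3rd-deri-estimate}, as the paper does, or supply his argument; the differencing argument alone does not suffice.
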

\begin{proof}
	For (\ref{2nd-deri-estimate}), one can see Corollary 8.13 of Iwaniec and Kowalski \cite{IwKo}, or Theorem 5
	of Chapter 1 in Karatsuba \cite{Kara}. For (\ref{3rd-deri-estimate}), one can see Corollary 4.2 of
	Sargos \cite{Sarg}.
\end{proof}

\begin{lemma}\label{Type-I-sum}
  Suppose that $|a_k|\ll 1,b_\ell=1$ or $\log\ell,KL\asymp x$. Then if $K\ll x^{1/2}$, there holds
\begin{equation*}
   S_{I}(K,L)\ll H^{7/6}x^{\gamma/6+3/4}+H^{2/3}x^{1-\gamma/3}.
\end{equation*}
\end{lemma}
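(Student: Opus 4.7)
The plan is to dispose of the weight $b_\ell$ by partial summation (trivial when $b_\ell=1$, elementary when $b_\ell=\log\ell$) and then, for each fixed $h$ and $k$, study the one-variable exponential sum
\begin{equation*}
T_{h,k}:=\sum_{\ell\sim L}\mathbf{e}\bigl(f(\ell)\bigr),\qquad f(\ell):=\theta h(k\ell)^{\gamma}+\Xi k\ell.
\end{equation*}
Since the linear term $\Xi k\ell$ vanishes after two differentiations, we find
\begin{equation*}
f''(\ell)=\theta h\gamma(\gamma-1)k^{\gamma}\ell^{\gamma-2},\qquad f'''(\ell)=\theta h\gamma(\gamma-1)(\gamma-2)k^{\gamma}\ell^{\gamma-3},
\end{equation*}
so on $\ell\sim L$, $k\sim K$, and using $KL\asymp x$, we obtain $|f'''(\ell)|\asymp\lambda_3$ with $\lambda_3\asymp h k^{\gamma}L^{\gamma-3}\asymp h x^{\gamma}L^{-3}$.

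Next, I apply the third-derivative bound \eqref{3rd-deri-estimate} of Lemma \ref{derivative-estimate}, which gives
\begin{equation*}
T_{h,k}\ll L\lambda_3^{1/6}+\lambda_3^{-1/3}\ll L^{1/2}h^{1/6}x^{\gamma/6}+Lh^{-1/3}x^{-\gamma/3}.
\end{equation*}
Summing over $k\sim K$ trivially (using $|a_k|\ll 1$, which contributes a factor $K$) and over $1\le h\le H$ (using $\sum_{h\le H}h^{1/6}\ll H^{7/6}$ and $\sum_{h\le H}h^{-1/3}\ll H^{2/3}$), the same bound holds after the $h\mapsto -h$ symmetry is accounted for, yielding
\begin{equation*}
S_I(K,L)\ll KL^{1/2}H^{7/6}x^{\gamma/6}+KL\cdot H^{2/3}x^{-\gamma/3}.
\end{equation*}

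Finally, I exploit the Type~I hypothesis $K\ll x^{1/2}$: combined with $KL\asymp x$ this forces $L\gg x^{1/2}$, hence $KL^{1/2}=(KL)\,L^{-1/2}\ll x\cdot x^{-1/4}=x^{3/4}$, while $KL\asymp x$ directly. Substituting these into the display above produces
\begin{equation*}
S_I(K,L)\ll H^{7/6}x^{\gamma/6+3/4}+H^{2/3}x^{1-\gamma/3},
\end{equation*}
which is the claimed bound. The argument is essentially mechanical; the only point requiring a bit of care is that one must verify $\lambda_3\asymp hx^{\gamma}L^{-3}$ uniformly on $\ell\sim L$ so that Lemma \ref{derivative-estimate} genuinely applies (the linear perturbation $\Xi k\ell$ is harmless because it contributes nothing to $f'''$), and the balancing of $KL^{1/2}$ against $x^{3/4}$ exactly matches the Type~I range $K\ll x^{1/2}$, which is why this lemma would have to be complemented later by a separate Type~II estimate.
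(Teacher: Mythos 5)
Your proposal is correct and follows essentially the same route as the paper: apply the third-derivative estimate \eqref{3rd-deri-estimate} to the inner sum over $\ell$ (the linear term contributing nothing to $f'''$), sum trivially over $k$ and over $h$, and then invoke $K\ll x^{1/2}$ together with $KL\asymp x$ to reach the stated bound. The only cosmetic difference is that you remove the weight $b_\ell=\log\ell$ by partial summation, whereas the paper absorbs it into a factor $x^{\varepsilon}$.
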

\begin{proof}
    Set $f(\ell)=\theta h(k\ell)^\gamma+\Xi k\ell$. It is easy to see that
\begin{equation*}
  f'''(\ell)=\gamma(\gamma-1)(\gamma-2)\theta hk^\gamma\ell^{\gamma-3}\asymp |h|K^{\gamma}L^{\gamma-3}.
\end{equation*}
If $K\ll x^{1/2}$, then by (\ref{3rd-deri-estimate}) of Lemma \ref{derivative-estimate}, we deduce that
\begin{align*}
            x^{-\varepsilon}\cdot S_{I}(K,L)
  \ll & \,\, \sum_{0<|h|\leqslant H}\sum_{k\sim K}
             \Bigg|\sum_{\ell\sim L}\mathbf{e}\big(f(\ell)\big)\Bigg|
                 \nonumber \\
  \ll & \,\, \sum_{0<|h|\leqslant H}\sum_{k\sim K}
             \Big(L\big(|h|K^{\gamma}L^{\gamma-3}\big)^{1/6}+\big(|h|K^{\gamma}L^{\gamma-3}\big)^{-1/3}\Big)
                 \nonumber \\
  \ll & \,\, \sum_{0<|h|\leqslant H}
             \Big(|h|^{1/6}x^{\gamma/6+1/2}K^{1/2}+|h|^{-1/3}x^{1-\gamma/3}\Big)
                 \nonumber \\
   \ll & \,\, H^{7/6}x^{\gamma/6+3/4}+H^{2/3}x^{1-\gamma/3},
\end{align*}
which completes the proof of Lemma \ref{Type-I-sum}.
\end{proof}

\begin{lemma}\label{Type-II-sum}
  Suppose that $|a_k|\ll 1,|b_\ell|\ll1$ with $k\sim K,\ell\sim L$ and $KL\asymp x$. Then if $x^{1/2}\ll K\ll x^{19/25}$, there holds
\begin{equation*}
  S_{II}(K,L)\ll H^{5/4}x^{\gamma/4+5/8}+H^{3/4}x^{1-\gamma/4}+Hx^{22/25}+H^{7/6}x^{\gamma/6+3/4}.
\end{equation*}
\end{lemma}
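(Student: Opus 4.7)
The plan is to apply Cauchy--Schwarz in the $k$-variable to eliminate the unknown coefficient $a_k$, and then estimate the resulting inner exponential sum via the derivative tests of Lemma~\ref{derivative-estimate}. Setting
\[
T_h\ :=\ \sum_{k\sim K}\sum_{\ell\sim L}a_k b_\ell\,\mathbf{e}\bigl(\theta h(k\ell)^\gamma+\Xi k\ell\bigr),
\]
so that $S_{II}(K,L)=\sum_{0<|h|\le H}|T_h|$, and noting that $K\gg x^{1/2}\ge L$ in the assumed range, I first apply Cauchy--Schwarz over $k$ (the longer variable):
\[
|T_h|^2\ \le\ K\sum_{k\sim K}\biggl|\sum_{\ell\sim L}b_\ell\,\mathbf{e}\bigl(\theta h(k\ell)^\gamma+\Xi k\ell\bigr)\biggr|^2\ =\ K\sum_{\ell_1,\ell_2\sim L}b_{\ell_1}\overline{b_{\ell_2}}\sum_{k\sim K}\mathbf{e}\bigl(f_{h,\ell_1,\ell_2}(k)\bigr),
\]
where $f_{h,\ell_1,\ell_2}(k):=\theta h k^\gamma(\ell_1^\gamma-\ell_2^\gamma)+\Xi k(\ell_1-\ell_2)$.

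I then split according to whether $\ell_1=\ell_2$. The diagonal contributes $\ll KL=x$ to $\sum_k|\cdots|^2$; taking the square root and summing over $h$, together with the upper bound $K\ll x^{19/25}$, yields the term $Hx^{22/25}$ in the claim. For the off-diagonal part I set $r=\ell_1-\ell_2\ne 0$, use $\ell_1^\gamma-\ell_2^\gamma\asymp rL^{\gamma-1}$ to compute $|f''(k)|\asymp|h||r|K^{\gamma-2}L^{\gamma-1}$ and $|f'''(k)|\asymp|h||r|K^{\gamma-3}L^{\gamma-1}$, and invoke Lemma~\ref{derivative-estimate}. Applying the second-derivative bound~\eqref{2nd-deri-estimate} to the inner $k$-sum, then summing over $r\in[1,L]$ (via $\sum_{r\le L}r^{1/2}\ll L^{3/2}$ and $\sum_{r\le L}r^{-1/2}\ll L^{1/2}$) and over $\ell\sim L$, taking the square root, and summing over $h\le H$ produces the two contributions $H^{5/4}x^{\gamma/4+5/8}$ and $H^{3/4}x^{1-\gamma/4}$. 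The fourth term $H^{7/6}x^{\gamma/6+3/4}$ arises by running the same scheme with the third-derivative estimate~\eqref{3rd-deri-estimate} in place of~\eqref{2nd-deri-estimate}; its shape matches the Type~I bound of Lemma~\ref{Type-I-sum}, which could equivalently be applied directly to the inner sum.

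The principal obstacle is the careful exponent accounting needed to land on $5/8$ rather than the naive $3/4$ in the first term. A dyadic partition of the $r$-range, choosing between the second- and third-derivative bounds on each piece and then invoking Lemma~\ref{compare} to balance the resulting free parameter, is the key mechanism: the upper bound $K\ll x^{19/25}$ has to be inserted exactly where the ``$K\lambda_2^{1/2}$''-contribution of the second-derivative test would otherwise exceed the stated bound, while the lower bound $K\gg x^{1/2}$ controls the inverse-derivative terms. Once this balancing is done, the four contributions combine into the claimed estimate by a routine calculation.
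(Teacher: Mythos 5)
Your overall architecture (Cauchy--Schwarz in $k$ to remove $a_k$, difference the phase in $\ell$, estimate the inner $k$-sum by a derivative test) is the right family of ideas, and it does correctly produce the terms $H^{3/4}x^{1-\gamma/4}$ (from the reciprocal part of the second-derivative test) and $Hx^{22/25}$ (from the diagonal, via $K\ll x^{19/25}$). But there is a genuine gap at exactly the point you flag: plain Cauchy--Schwarz cannot yield the exponent $5/8$. After squaring you are forced to sum the off-diagonal over \emph{all} shifts $1\le |r|\le L$, and the main part of the second-derivative bound gives
\begin{equation*}
K\sum_{\ell_2\sim L}\sum_{1\le r\le L}K\big(|h|\,rK^{\gamma-2}L^{\gamma-1}\big)^{1/2}
\ll |h|^{1/2}K^{1+\gamma/2}L^{\gamma/2+2}=|h|^{1/2}x^{\gamma/2+1}L\ll |h|^{1/2}x^{\gamma/2+3/2},
\end{equation*}
hence $|T_h|\ll |h|^{1/4}x^{\gamma/4+3/4}$ and $S_{II}\ll H^{5/4}x^{\gamma/4+3/4}$. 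No dyadic partition of $r$ helps (the block $r\sim L$ already dominates), and replacing the second-derivative test by the third-derivative one on the large-$r$ blocks gives $H^{13/12}x^{\gamma/12+7/8}$, which for $\gamma<1$ is still larger than $H^{5/4}x^{\gamma/4+5/8}$. There is no ``resulting free parameter'' for Lemma~\ref{compare} to balance: full Cauchy--Schwarz fixes the shift range at $L$.

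The missing ingredient is the Weyl--van der Corput inequality (van der Corput's $A$-process with a truncated shift), which is what the paper uses: for any $1<Q<L$,
\begin{equation*}
|T_h|^2\ll K^2L^2Q^{-1}+KLQ^{-1}\sum_{\ell\sim L}\sum_{0<|q|\le Q}\Big|\sum_k\mathbf{e}\big(\theta hk^\gamma(\ell^\gamma-(\ell+q)^\gamma)-\Xi kq\big)\Big|.
\end{equation*}
The prefactor $KLQ^{-1}$ together with the restriction $|q|\le Q$ turns $Q$ into a genuine free parameter; applying the second-derivative test to the inner sum and then optimizing $Q\in[1,L]$ via Lemma~\ref{compare} produces $|h|^{1/2}x^{\gamma/2+3/2}K^{-1/2}$ (note the extra factor $K^{-1/2}$), and it is this saving, combined with $K\gg x^{1/2}$, that gives $|h|^{1/4}x^{\gamma/4+5/8}$; the term $H^{7/6}x^{\gamma/6+3/4}$ also arises as a cross term of this optimization, not from a third-derivative test. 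The loss of $x^{1/8}$ in your version is not cosmetic: carried through Section~3 it would force $\gamma>1$, rendering the theorem vacuous. You need to replace the plain Cauchy--Schwarz step by the $Q$-truncated Weyl--van der Corput inequality and then optimize $Q$.
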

\begin{proof}
  Let $Q$, which satisfies $1<Q<L$, be a parameter which will be chosen later. By the Weyl--van der Corput inequality
  (see Lemma 2.5 of Graham and Kolesnik \cite{GraKol}), we have
\begin{equation}\label{Weyl-inequality}
  \Bigg|\mathop{\sum_{k\sim K}\sum_{\ell\sim L}}_{KL\asymp x}a_kb_\ell \mathbf{e}
  \big(\theta h(k\ell)^\gamma+\Xi k\ell\big)\Bigg|^2
  \ll K^2L^2Q^{-1}+KLQ^{-1}\sum_{\ell\sim L}\sum_{0<|q|\leqslant Q}\big|\mathfrak{S}(q;\ell)\big|,
\end{equation}
where
\begin{equation*}
  \mathfrak{S}(q;\ell)=\sum_{k\in\mathcal{I}(q;\ell)}\mathbf{e}\big(g(k)\big)
\end{equation*}
with
\begin{equation*}
  g(k)=\theta hk^\gamma\big(\ell^\gamma-(\ell+q)^\gamma\big)-\Xi kq.
\end{equation*}
It is easy to see that
\begin{equation*}
  g''(k)=\gamma(\gamma-1)\theta hk^{\gamma-2}\big(\ell^\gamma-(\ell+q)^\gamma\big)\asymp |h|K^{\gamma-2}L^{\gamma-1}|q|.
\end{equation*}
By (\ref{2nd-deri-estimate}) of Lemma \ref{derivative-estimate}, we have
\begin{equation}\label{Type-II-inner}
  \mathfrak{S}(q;\ell)\ll K\big(|h|K^{\gamma-2}L^{\gamma-1}|q|\big)^{1/2}+\big(|h|K^{\gamma-2}L^{\gamma-1}|q|\big)^{-1/2}.
\end{equation}
Putting (\ref{Type-II-inner}) into (\ref{Weyl-inequality}), we derive that
\begin{align*}
     & \,\, \Bigg|\mathop{\sum_{k\sim K}\sum_{\ell\sim L}}_{KL\asymp x}a_kb_\ell \mathbf{e}
            \big(\theta h(k\ell)^\gamma+\Xi k\ell\big)\Bigg|^2
                \nonumber \\
 \ll & \,\, K^2L^2Q^{-1}+KLQ^{-1}
                \nonumber \\
   & \,\,\times\sum_{\ell\sim L}\sum_{0<|q|\leqslant Q}
            \big(|h|^{1/2}K^{\gamma/2}L^{\gamma/2-1/2}|q|^{1/2}+|h|^{-1/2}K^{1-\gamma/2}L^{1/2-\gamma/2}|q|^{-1/2}\big)
                \nonumber \\
 \ll & \,\,K^2L^2Q^{-1}+KLQ^{-1}\big(|h|^{1/2}K^{\gamma/2}L^{\gamma/2+1/2}Q^{3/2}
           +|h|^{-1/2}K^{1-\gamma/2}L^{3/2-\gamma/2}Q^{1/2}\big)
                \nonumber \\
 \ll & \,\, K^2L^2Q^{-1}+|h|^{1/2}K^{1+\gamma/2}L^{\gamma/2+3/2}Q^{1/2}+|h|^{-1/2}K^{2-\gamma/2}L^{5/2-\gamma/2}Q^{-1/2}.
\end{align*}
By noting that $1\leqslant Q\leqslant L$, it follows from Lemma \ref{compare} that there exists an optimal $Q$ such that
\begin{align*}
   & \,\, \Bigg|\mathop{\sum_{k\sim K}\sum_{\ell\sim L}}_{KL\asymp x}a_kb_\ell \mathbf{e}
          \big(\theta h(k\ell)^\gamma+\Xi k\ell\big)\Bigg|^2
                \nonumber \\
\ll & \,\, |h|^{1/2}x^{\gamma/2+3/2}K^{-1/2}+Kx+|h|^{-1/2}x^{2-\gamma/2}+|h|^{1/3}x^{\gamma/3+5/3}K^{-1/3}+K^{-1/2}x^2,
\end{align*}
which implies
\begin{align*}
           \Bigg|\mathop{\sum_{k\sim K}\sum_{\ell\sim L}}_{KL\asymp x}a_kb_\ell \mathbf{e}
           \big(\theta h(k\ell)^\gamma+\Xi k\ell\big)\Bigg|
\ll & \,\, |h|^{1/4}x^{\gamma/4+3/4}K^{-1/4}+|h|^{-1/4}x^{1-\gamma/4}
                \nonumber \\
    & \,\, +K^{1/2}x^{1/2}+|h|^{1/6}x^{\gamma/6+5/6}K^{-1/6}+K^{-1/4}x.
\end{align*}
Therefore, from the above estimate and the condition $x^{1/2}\ll K\ll x^{19/25}$, we obtain
\begin{align*}
           \big| S_{II}(K,L) \big|
\ll & \,\, \sum_{0<|h|\leqslant H}\Big(|h|^{1/4}x^{\gamma/4+3/4}K^{-1/4}
             +|h|^{-1/4}x^{1-\gamma/4}
               \nonumber \\
  & \,\,   \qquad +K^{1/2}x^{1/2}+|h|^{1/6}x^{\gamma/6+5/6}K^{-1/6}+K^{-1/4}x\Big)
               \nonumber \\
\ll & \,\,  H^{5/4} x^{\gamma/4+3/4}K^{-1/4}+H^{3/4}x^{1-\gamma/4}+HK^{1/2}x^{1/2}
               \nonumber \\
    & \,\,  \qquad +H^{7/6}x^{\gamma/6+5/6}K^{-1/6}+HK^{-1/4}x
               \nonumber \\
\ll & \,\,  H^{5/4}x^{\gamma/4+5/8}+H^{3/4}x^{1-\gamma/4}+Hx^{22/25}+H^{7/6}x^{\gamma/6+3/4},
\end{align*}
which completes the proof of Lemma \ref{Type-II-sum}.
\end{proof}

\section{Proof of Theorem~\ref{thm:main}}

By a same argument of \cite[Section~3]{Guo-Qi-2021}, we have
\begin{align}\label{pi-decom-z}
  \pi_{\alpha, \beta, c}(x;q,a)
= \Sigma_1(x)+\Sigma_2(x)+O(x^{\gamma-1}),
\end{align}
where
\begin{equation*}
\Sigma_1(x)=\theta\gamma\sum_{\substack{p\leqslant x\\ p\equiv a\pmod q}}p^{\gamma-1},
\end{equation*}
and
\begin{equation*}
\Sigma_2(x)=\sum_{\substack{p\leqslant x\\ p\equiv a\pmod q}}\Big(\psi\big(-\theta(p+1-\beta)^\gamma\big)
         -\psi\big(-\theta(p-\beta)^\gamma\big)\Big).
\end{equation*}
For $\Sigma_1(x)$, by partial summation, we get
\begin{align}\label{Sigma_1-asymp}
          \Sigma_1(x)
 = & \,\, \theta\gamma\int_2^xu^{\gamma-1}\mathrm{d}\Bigg(\sum_{\substack{p\leqslant u\\ p\equiv a\pmod q}}1\Bigg)
          =\theta\gamma\int_2^xu^{\gamma-1}\mathrm{d}\pi(u;q,a)
                 \nonumber \\
 = & \theta\gamma x^{\gamma-1}\pi(x;q,a)-\theta\gamma(\gamma-1)\int_2^xu^{\gamma-2}\pi(u;q,a)\mathrm{d}u.
\end{align}
Next, we turn our attention to $\Sigma_2(x)$. Define
\begin{equation*}
  \mathcal{H}(x)=\sum_{\substack{n\leqslant x\\ n\equiv a\,(\!\!\bmod q)}}
  \Lambda(n)\big(\psi(-\theta(n+1-\beta)^\gamma)-\psi(-\theta(n-\beta)^\gamma)\big),
\end{equation*}
\begin{equation*}
  \mathcal{J}(x)=\sum_{\substack{p\leqslant x\\ p\equiv a\,(\!\!\bmod q)}}
  (\log p)\big(\psi(-\theta(p+1-\beta)^\gamma)-\psi(-\theta(p-\beta)^\gamma)\big).
\end{equation*}
Trivially, we have
\begin{equation}\label{H-J-eq}
  \mathcal{H}(x)=\mathcal{J}(x)+O(x^{1/2}),
\end{equation}
Moreover, it follows from partial summation that
\begin{equation}\label{Sigma-2-decom}
  \Sigma_2(x)=\int_2^x\frac{1}{\log u}\mathrm{d}\mathcal{J}(u)=\frac{\mathcal{J}(x)}{\log x}
  +\int_2^x\frac{\mathcal{J}(u)}{u\log^2u}\mathrm{d}u.
\end{equation}
In order to obtain the upper bound estimate of $\Sigma_2(x)$, it follows from (\ref{H-J-eq}) and (\ref{Sigma-2-decom}) that we only need to derive the upper bound estimate of $\mathcal{H}(x)$. By a splitting argument, it is sufficient to give the upper bound estimate of the following sum
\begin{equation*}
 \mathcal{S}:=\sum_{\substack{x/2<n\leqslant x\\ n\equiv a\pmod q}} \Lambda(n)\Big(\psi\big(-\theta(n+1-\beta)^\gamma\big)-\psi\big(-\theta(n-\beta)^\gamma\big)\Big).
\end{equation*}
According to Vaaler's approximation, i.e. Lemma \ref{lem:Vaaler}, we can write
\begin{equation}\label{S-decompo}
\mathcal{S}=\mathcal{S}_1+O(|\mathcal{S}_2|),
\end{equation}
where
\begin{equation*}
\mathcal{S}_1=\sum_{\substack{x/2<n\leqslant x\\ n\equiv a\pmod q}} \Lambda(n)
\sum_{0<|h|\leqslant H}a_h\big(\mathbf{e}(\theta h(n+1-\beta)^\gamma)-\mathbf{e}(\theta h(n-\beta)^\gamma)\big),
\end{equation*}
\begin{equation*}
\mathcal{S}_2=\sum_{\substack{x/2<n\leqslant x\\ n\equiv a\pmod q}} \Lambda(n)
\sum_{|h|\leqslant H}b_h\big(\mathbf{e}(\theta h(n+1-\beta)^\gamma)+\mathbf{e}(\theta h(n-\beta)^\gamma)\big).
\end{equation*}
Moreover, we split $\mathcal{S}_1$ into two parts
\begin{equation}\label{S1-two-parts}
   \mathcal{S}_1=\mathcal{S}_1^{(1)}+\mathcal{S}_1^{(2)},
\end{equation}
where
\begin{equation*}
\mathcal{S}_1^{(1)}=\sum_{\substack{x/2<n\leqslant x\\ n\equiv a\pmod q}} \Lambda(n)
\sum_{0<|h|\leqslant H}a_h\big(\mathbf{e}(\theta h(n+1-\beta)^\gamma)-\mathbf{e}(\theta hn^\gamma)\big),
\end{equation*}
\begin{equation*}
\mathcal{S}_1^{(2)}=\sum_{\substack{x/2<n\leqslant x\\ n\equiv a\pmod q}} \Lambda(n)
\sum_{0<|h|\leqslant H}a_h\big(\mathbf{e}(\theta hn^\gamma)-\mathbf{e}(\theta h(n-\beta)^\gamma)\big).
\end{equation*}
Firstly, we shall consider the upper bound of $\mathcal{S}_1^{(1)}$. Let
\begin{equation*}
 \phi_{h}(t):=\mathbf{e}\big(h((t+1-\beta)^\gamma-t^\gamma)\big)-1.
\end{equation*}
Therefore, $\mathcal{S}_1^{(1)}$ is
\begin{equation*}
 =\sum_{\substack{x/2<n\leqslant x\\ n\equiv a\pmod q}}\Lambda(n)\sum_{0<|h|\leqslant H}a_h
  \phi_{h}(n)\mathbf{e}(\theta hn^\gamma)
 =\sum_{0<|h|\leqslant H}a_h\sum_{\substack{x/2<n\leqslant x\\ n\equiv a\pmod q}} \Lambda(n)
  \phi_{h}(n)\mathbf{e}(\theta hn^\gamma),
\end{equation*}
which combined with the upper bound $a_{h}\ll|h|^{-1}$ yields
\begin{equation*}
    \mathcal{S}_1^{(1)}\ll\sum_{0<|h|\leqslant H}\frac{1}{|h|}\Bigg|
    \sum_{\substack{x/2<n\leqslant x\\ n\equiv a\pmod q}} \Lambda(n)\phi_{h}(n)\mathbf{e}(\theta hn^\gamma)\Bigg|.
\end{equation*}
It follows from partial summation and the bounds
\begin{equation*}
\phi_{h}(t)\ll|h|t^{\gamma -1}\qquad \textrm{and} \qquad\frac{\partial\phi_{h}(t)}{\partial t}\ll|h|t^{\gamma-2}
\end{equation*}
that
\begin{align}\label{S_1-1-upper}
           \mathcal{S}_1^{(1)}
\ll & \,\, \sum_{0<|h|\leqslant H}\frac{1}{|h|}\Bigg|
           \int_{\frac{x}{2}}^x\phi_{h}(t)\mathrm{d}\Bigg(
           \sum_{\substack{x/2<n\leqslant t\\ n\equiv a\pmod q}}\Lambda(n)\mathbf{e}(\theta hn^\gamma)\Bigg)\Bigg|
                   \nonumber \\
\ll & \,\, \sum_{0<|h|\leqslant H}\frac{1}{|h|}\Big|\phi_{h}(x)\Big|\Bigg|
           \sum_{\substack{x/2<n\leqslant x\\ n\equiv a\pmod q}}\Lambda(n)\mathbf{e}(\theta hn^\gamma)\Bigg|
                   \nonumber \\
    & \,\, +\sum_{0<|h|\leqslant H}\frac{1}{|h|}\int_{\frac{x}{2}}^x
           \Bigg|\frac{\partial\phi_{h}(t)}{\partial t}\Bigg|
           \Bigg|\sum_{\substack{x/2<n\leqslant t\\ n\equiv a\pmod q}}\Lambda(n)\mathbf{e}(\theta hn^\gamma)\Bigg|\mathrm{d}t
                   \nonumber \\
\ll & \,\  x^{\gamma-1}\times\max_{x/2<t\leqslant x}\sum_{0<|h|\leqslant H}
           \Bigg|\sum_{\substack{x/2<n\leqslant t\\ n\equiv a\pmod q}}\Lambda(n)\mathbf{e}(\theta hn^\gamma)\Bigg|.
\end{align}
For $\mathcal{S}_1^{(2)}$, by a similar argument with $\phi_h(t)$ replaced by $\Xi_h(t)$ defined by
\begin{equation*}
  \Xi_h(t)=1-\mathbf{e}\big(\theta h((t-\beta)^\gamma-t^\gamma)\big),
\end{equation*}
which satisfies
\begin{equation*}
\Xi_{h}(t)\ll|h|t^{\gamma -1}\qquad \textrm{and} \qquad\frac{\partial\Xi_{h}(t)}{\partial t}\ll|h|t^{\gamma-2},
\end{equation*}
one can also derive that
\begin{equation}\label{S_1-2-upper}
\mathcal{S}_1^{(2)}\ll x^{\gamma-1}\times\max_{x/2<t\leqslant x}\sum_{0<|h|\leqslant H}
           \Bigg|\sum_{\substack{x/2<n\leqslant t\\ n\equiv a\pmod q}}\Lambda(n)\mathbf{e}(\theta hn^\gamma)\Bigg|.
\end{equation}
In order to prove Theorem \ref{thm:main}, it is sufficient to give the upper bound estimate of the following sum
\begin{equation}\label{suff-condition}
  \max_{x/2<t\leqslant x}\sum_{0<|h|\leqslant H}
  \Bigg|\sum_{\substack{x/2<n\leqslant t\\ n\equiv a\pmod q}}\Lambda(n)\mathbf{e}(\theta hn^\gamma)\Bigg|.
\end{equation}
By using the well--known orthogonality
\begin{equation*}
 \frac{1}{q}\sum_{m=1}^q\mathbf{e}\bigg(\frac{(n-a)m}{q}\bigg)=
 \begin{cases}
   1, & \textrm{if $q|n-a$}, \\
   0, & \textrm{if $q\nmid n-a$},
 \end{cases}
\end{equation*}
we can represent the innermost sum in (\ref{suff-condition}) as
\begin{equation}\label{inner-trans}
  \sum_{\substack{x/2<n\leqslant t\\ n\equiv a\pmod q}}\Lambda(n)\mathbf{e}(\theta hn^\gamma)
  =\frac{1}{q}\sum_{m=1}^q\sum_{x/2<n\leqslant t}\Lambda(n)\mathbf{e}\bigg(\theta hn^\gamma+\frac{(n-a)m}{q}\bigg).
\end{equation}
From (\ref{suff-condition}) and (\ref{inner-trans}), we know that it suffices to estimate
\begin{equation*}
  \max_{x/2<t\leqslant x}\sum_{0<|h|\leqslant H}
  \Bigg|\sum_{x/2<n\leqslant t}\Lambda(n)\mathbf{e}\big(\theta hn^\gamma+nmq^{-1}\big)\Bigg|.
\end{equation*}
By Heath--Brown's identity, i.e. Lemma \ref{Heath-Brown-identity}, with $k=3$, one can see that the exponential sum
\begin{equation*}
  \max_{x/2<t\leqslant x}\sum_{0<|h|\leqslant H}
  \Bigg|\sum_{x/2<n\leqslant t}\Lambda(n)\mathbf{e}\big(\theta hn^\gamma+nmq^{-1}\big)\Bigg|
\end{equation*}
can be written as linear combination of $O(\log^6x)$ sums, each of which is of the form
\begin{align}\label{single-sum}
          \mathcal{T}^*
:= & \,\, \sum_{0<|h|\leqslant H}\Bigg|\sum_{n_1\sim N_1}\cdots
          \sum_{n_6\sim N_6}(\log n_1)\mu(n_4)\mu(n_5)\mu(n_6)
                   \nonumber \\
   & \,\, \qquad\qquad\qquad\times \mathbf{e}\big(\theta h(n_1n_2\cdots n_6)^\gamma+(n_1n_2\cdots n_6)mq^{-1}\big)\Bigg|,
\end{align}
where $N_1N_2\cdots N_6\asymp x$; $2N_i\leqslant(2x)^{1/3},i=4,5,6$ and some $n_i$ may only take value $1$.
Therefore, it is sufficient for us to give upper bound estimate for each $\mathcal{T}^*$ defined as in (\ref{single-sum}). Next, we will consider three cases.

\noindent
\textbf{Case 1.} If there exists an $N_j$ such that $N_j\geqslant x^{1/2}$, then we must have $j\leqslant3$ for the fact that
$N_i\ll x^{1/3}$ with $i=4,5,6$. Let
\begin{equation*}
  k=\prod_{\substack{1\leqslant i\leqslant6\\ i\not=j}}n_i,\qquad \ell=n_j,\qquad
  K=\prod_{\substack{1\leqslant i\leqslant6\\ i\not=j}}N_i,\qquad L=N_j.
\end{equation*}
In this case, we can see that $\mathcal{T}^*$ is a sum of ``Type I'' satisfying $K\ll x^{1/2}$. By Lemma \ref{Type-I-sum},
we have
\begin{equation*}
  x^{-\varepsilon}\cdot \mathcal{T}^*\ll H^{7/6}x^{\gamma/6+3/4}+H^{2/3}x^{1-\gamma/3}.
\end{equation*}

\noindent
\textbf{Case 2.} If there exists an $N_j$ such that $x^{6/25}\leqslant N_j<x^{1/2}$, then we take
\begin{equation*}
  k=\prod_{\substack{1\leqslant i\leqslant6\\ i\not=j}}n_i,\qquad \ell=n_j,\qquad
  K=\prod_{\substack{1\leqslant i\leqslant6\\ i\not=j}}N_i,\qquad L=N_j.
\end{equation*}
Thus, $\mathcal{T}^*$ is a sum of ``Type II'' satisfying $x^{1/2}\ll K\ll x^{19/25}$. By Lemma \ref{Type-II-sum},
we have
\begin{equation*}
 x^{-\varepsilon}\cdot  \mathcal{T}^*\ll H^{5/4}x^{\gamma/4+5/8}+H^{3/4}x^{1-\gamma/4}+Hx^{22/25}+H^{7/6}x^{\gamma/6+3/4}.
\end{equation*}

\noindent
\textbf{Case 3.} If $N_j<x^{6/25}\,(j=1,2,3,4,5,6)$, without loss of generality, we assume that
$N_1\geqslant N_2\geqslant\cdots\geqslant N_6$. Let $r$ denote the natural number $j$ such that
\begin{equation*}
  N_1N_2\cdots N_{j-1}<x^{6/25},\qquad N_1N_2\cdots N_j\geqslant x^{6/25}.
\end{equation*}
Since $N_1<x^{6/25}$ and $N_6<x^{6/25}$, then $2\leqslant r\leqslant5$. Thus, we have
\begin{equation*}
 x^{6/25}\leqslant N_1N_2\cdots N_r=(N_1\cdots N_{r-1})\cdot N_r<x^{6/25}\cdot x^{6/25}<x^{1/2}.
\end{equation*}
Let
\begin{equation*}
 k=\prod_{i=r+1}^6n_i,\qquad \ell=\prod_{i=1}^rn_i,\qquad K=\prod_{i=r+1}^6N_i,\qquad L=\prod_{i=1}^rN_i.
\end{equation*}
 At this time, $\mathcal{T}^*$ is a sum of ``Type II'' satisfying $x^{1/2}\ll K\ll x^{19/25}$. By Lemma \ref{Type-II-sum},
we have
\begin{equation*}
  x^{-\varepsilon}\cdot \mathcal{T}^*\ll H^{5/4}x^{\gamma/4+5/8}+H^{3/4}x^{1-\gamma/4}+Hx^{22/25}+H^{7/6}x^{\gamma/6+3/4}.
\end{equation*}
Combining the above three cases, we derive that
\begin{align*}
             x^{-\varepsilon}\cdot \mathcal{T}^*
  \ll & \,\, H^{7/6}x^{\gamma/6+3/4}+H^{2/3}x^{1-\gamma/3}+H^{5/4}x^{\gamma/4+5/8}+H^{3/4}x^{1-\gamma/4}+Hx^{22/25},
\end{align*}
which combined with (\ref{S1-two-parts}), (\ref{S_1-1-upper}), and  (\ref{S_1-2-upper}) yields
\begin{align}\label{S_1-upper-fi}
           x^{-\varepsilon}\cdot  \mathcal{S}_1
 \ll & \,\, H^{5/4}x^{5\gamma/4-3/8}+H^{3/4}x^{3\gamma/4}+Hx^{\gamma-3/25}
                    +H^{7/6}x^{7\gamma/6-1/4}.
\end{align}
Now, we focus on the upper bound of $\mathcal{S}_2$. The contribution from $h=0$ is
\begin{equation}\label{S_2-zero}
  2b_0\sum_{\substack{x/2<n\leqslant x\\ n\equiv a\pmod q}} \Lambda(n)\ll\frac{b_0x}{\varphi(q)}
  \ll xH^{-1}.
\end{equation}
On the other hand, by similar arguments of $\mathcal{S}_1$ with a shift of $n$, the contribution from $h\not=0$ is
\begin{align*}
  \ll & \,\, \sum_{\substack{x/2<n\leqslant x\\ n\equiv a\pmod q}} \Lambda(n)
             \sum_{0<|h|\leqslant H}b_h\mathbf{e}(\theta hn^\gamma)
   =  \sum_{0<|h|\leqslant H}b_h\sum_{\substack{x/2<n\leqslant x\\ n\equiv a\pmod q}}
             \Lambda(n)\mathbf{e}(\theta hn^\gamma)
                   \nonumber \\
  \ll & \,\, \frac{1}{H}\sum_{0<|h|\leqslant H}\Bigg|\sum_{\substack{x/2<n\leqslant x\\ n\equiv a\pmod q}}
             \Lambda(n)\mathbf{e}(\theta hn^\gamma)\Bigg|,
\end{align*}
which can be treated as the process of (\ref{suff-condition}) to give the upper bound
\begin{align}\label{S_2-nonzero}
  \ll & \,\, H^{1/6}x^{\gamma/6+3/4}+H^{-1/3}x^{1-\gamma/3}+H^{1/4}x^{\gamma/4+5/8} +H^{-1/4}x^{1-\gamma/4}+x^{22/25}.
\end{align}
From (\ref{S-decompo}), (\ref{S_1-upper-fi}), (\ref{S_2-zero}) and (\ref{S_2-nonzero}), we obtain
\begin{align*}
            x^{-\varepsilon}\cdot \mathcal{S}
 \ll & \,\, H^{5/4}x^{5\gamma/4-3/8}+H^{3/4}x^{3\gamma/4}+Hx^{\gamma-3/25}+H^{7/6}x^{7\gamma/6-1/4}
                      +H^{1/6}x^{\gamma/6+3/4}
                 \nonumber \\
 & \,\, +H^{1/4}x^{\gamma/4+5/8}+x^{22/25} +H^{-1/3}x^{1-\gamma/3} +H^{-1/4}x^{1-\gamma/4}+xH^{-1}.
\end{align*}
Since the above upper bound holds for any real $H\geqslant1$, using Lemma \ref{compare} we deduce that
\begin{align}\label{S-fi-up}
            x^{-\varepsilon}\cdot \mathcal{S}
 \ll & \,\, x^{5\gamma/4-3/8}+x^{3\gamma/4}+x^{\gamma-3/25}+x^{7\gamma/6-1/4}+x^{\gamma/6+3/4}+x^{\gamma/4+5/8}
                \nonumber \\
     & \,\, +x^{22/25}+x^{5\gamma/9+7/18}+x^{3\gamma/7+3/7}+x^{\gamma/2+11/25}+x^{7\gamma/13+11/26}
                \nonumber \\
     & \,\, +x^{\gamma/7+11/14}+x^{\gamma/5+7/10}.
\end{align}
By noting the fact that $\pi_{\alpha,\beta,c}(x;q,a)\ll x^\gamma$, the above bound is trivial unless the exponent of each term
in the parentheses is strictly less than $\gamma$, which means that $\gamma>11/12$. Under this condition, after eliminating lower order terms, the previous bound of $\mathcal{S}$ in (\ref{S-fi-up}) simplifies to
\begin{equation*}
   \mathcal{S}\ll x^{7\gamma/13+11/26+\varepsilon}
\end{equation*}
for any $\varepsilon>0$. Therefore, we obtain $\mathcal{H}(x)\ll x^{7\gamma/13+11/26+\varepsilon}\ll x^{\gamma-\varepsilon}$
when $\gamma>11/12$, and thus $\mathcal{J}(x)\ll x^{7\gamma/13+11/26+\varepsilon}$. Moreover, from (\ref{Sigma-2-decom})
we derive that
\begin{equation}\label{Sigma-2-upper-fi}
   \Sigma_2(x)\ll x^{7\gamma/13+11/26+\varepsilon}\ll x^{\gamma-\varepsilon}.
\end{equation}
Consequently, according to (\ref{pi-decom-z}), (\ref{Sigma_1-asymp}) and (\ref{Sigma-2-upper-fi}), we derive the asymptotic
formula (\ref{Thm-1-asymp}). This completes the proof of Theorem \ref{thm:main}.

\section{Sketch of proof of Theorem~\ref{thm:2}}
By exactly the same argument of~\cite[Section~4]{Guo-Qi-2021}, we conclude that:
\begin{theorem}
\label{thm:3}
Let $\alpha\geqslant1$ and $\beta$ be real numbers. Suppose that $c\in(1,\frac{12}{11})$. Then we have
\begin{align*}
          \vartheta_{\alpha,\beta,c}(x;q,a)
 = & \,\, \alpha^{-\gamma}\gamma x^{\gamma-1}\vartheta(x;q,a)  \\
   & \,\, +\alpha^{-\gamma}\gamma(1-\gamma)\int_2^xu^{\gamma-2}\vartheta(u;q,a)\mathrm{d}u+O(x^{7\gamma/13+11/26+\varepsilon}),
\end{align*}
where the implied constant depends only on $\alpha,\beta,c$ and $\varepsilon$.
\end{theorem}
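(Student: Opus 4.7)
The plan is to replicate the argument of Section~3 essentially verbatim, with the von Mangoldt-weighted sums replaced by log-weighted prime sums throughout. The excerpt itself asserts that the proof follows \cite[Section~4]{Guo-Qi-2021} \emph{mutatis mutandis}, and indeed all the hard analytic work has already been done in the course of proving Theorem~\ref{thm:main}.

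First I would set up the analogue of the decomposition~(\ref{pi-decom-z}). Using the standard fact that the indicator of $n\in\mathscr{N}_{\alpha,\beta}^{(c)}$ equals $\lfloor-\theta(n+1-\beta)^\gamma\rfloor-\lfloor-\theta(n-\beta)^\gamma\rfloor$, inserting $\lfloor t\rfloor=t-\psi(t)-1/2$, and restricting to primes $p\equiv a\pmod q$, one obtains
$$\vartheta_{\alpha,\beta,c}(x;q,a)=\Sigma_1'(x)+\Sigma_2'(x)+O(x^{\gamma-1}\log x),$$
where
$$\Sigma_1'(x)=\theta\gamma\sum_{\substack{p\leqslant x\\ p\equiv a\pmod q}}p^{\gamma-1}\log p,$$
and $\Sigma_2'(x)=\mathcal{J}(x)$ is exactly the log-weighted sawtooth sum defined in Section~3.

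Next I would treat $\Sigma_1'(x)$ by Stieltjes integration against the Chebyshev function $\vartheta(u;q,a)$, in direct parallel with~(\ref{Sigma_1-asymp}). Writing $\Sigma_1'(x)=\theta\gamma\int_2^x u^{\gamma-1}\,\mathrm{d}\vartheta(u;q,a)$ and integrating by parts yields
$$\Sigma_1'(x)=\theta\gamma x^{\gamma-1}\vartheta(x;q,a)+\theta\gamma(1-\gamma)\int_2^x u^{\gamma-2}\vartheta(u;q,a)\,\mathrm{d}u,$$
which (recalling $\theta=\alpha^{-\gamma}$) produces the two main terms claimed in Theorem~\ref{thm:3}. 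The crucial observation is that $\Sigma_2'(x)=\mathcal{J}(x)$ has already been estimated inside the proof of Theorem~\ref{thm:main}: from~(\ref{H-J-eq}) we have $\mathcal{J}(x)=\mathcal{H}(x)+O(x^{1/2})$, and the bound $\mathcal{H}(x)\ll x^{7\gamma/13+11/26+\varepsilon}$ emerges from Vaaler's approximation (Lemma~\ref{lem:Vaaler}), the Heath--Brown identity (Lemma~\ref{Heath-Brown-identity}), and Lemmas~\ref{Type-I-sum}--\ref{Type-II-sum} as carried out in~(\ref{S-decompo})--(\ref{Sigma-2-upper-fi}). Therefore $\Sigma_2'(x)\ll x^{7\gamma/13+11/26+\varepsilon}$ throughout the range $\gamma>11/12$, i.e.\ $1<c<12/11$, and assembling the three pieces produces the asymptotic stated in Theorem~\ref{thm:3}.

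There is essentially no new analytic obstacle to overcome: the entire force of the exponential-sum machinery---the Type~I estimate of order $H^{7/6}x^{\gamma/6+3/4}+H^{2/3}x^{1-\gamma/3}$, the Type~II estimate of order $H^{5/4}x^{\gamma/4+5/8}+H^{3/4}x^{1-\gamma/4}+Hx^{22/25}+H^{7/6}x^{\gamma/6+3/4}$, and the Lemma~\ref{compare} optimization of the truncation parameter $H$---has already been expended in Section~3 to bound $\mathcal{H}(x)$. The only step requiring verification is that the partial-summation identity used for $\Sigma_1'$ yields the correct coefficient $\theta\gamma(1-\gamma)$ on the integral main term, which follows directly from the sign flip $(\gamma-1)\mapsto(1-\gamma)$ upon integration by parts, in exact analogy with~(\ref{Sigma_1-asymp}). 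Thus the mildly ``hard'' part reduces to matching constants, and the theorem is essentially a free corollary of the analysis carried out for Theorem~\ref{thm:main}.
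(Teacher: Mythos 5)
Your proposal is correct and follows essentially the same route as the paper, which itself simply defers to the argument of Section~3 (equivalently \cite[Section~4]{Guo-Qi-2021}): decompose via the floor-function identity, evaluate the main term by partial summation against $\vartheta(u;q,a)$, and reuse the bound $\mathcal{J}(x)\ll x^{7\gamma/13+11/26+\varepsilon}$ already established in the proof of Theorem~\ref{thm:main}. The only blemish is a transposed sign in your stated indicator identity (it should be $\lfloor-\theta(n-\beta)^\gamma\rfloor-\lfloor-\theta(n+1-\beta)^\gamma\rfloor$), which is harmless since your $\Sigma_1'$ and $\Sigma_2'$ carry the correct signs.
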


The proof of our Theorem~\ref{thm:2} is exactly the same as ~\cite[Section~4]{Guo-Qi-2021} by switching the conditions
$$
1<c<\frac{14}{13}  \qquad \hbox{and} \qquad -\frac{13}{35} + \frac{2\gamma}{5}
$$
into
$$
1<c<\frac{12}{11} \qquad \hbox{and} \qquad -\frac{11}{26} + \frac{6\gamma}{13}.
$$
Let $ \pi(x,y) $ be the number of those for which $p-1$ is free of prime factors exceeding $y$. Let $\mathcal{E}$ be the set of numbers $E$ in the range $0<E<1$ for which
$$
\pi(x, x^{1-E}) \ge x^{1+o(1)}
$$
as $x\rightarrow \infty$, where the function implied by $o(1)$ depends on $E$. By the same argument in~\cite[Section~4]{Guo-Qi-2021}, we conclude the following statement.

\begin{lemma}
\label{lem:fin}
Let $\alpha \ge 1$ and $\beta$ be real numbers. Suppose that $c\in\big(1,\frac{38}{37}\big)$.
Let $B,B_1$ be positive real numbers such that $B_1<B<-\frac{11}{26}+\frac{6\gamma}{13}$. For any $E\in\mathcal{E}$,
there exsits a number $x_3$ depending on $c,B,B_1,E$ and $\varepsilon$, such that for any $x\geqslant x_1$ there exist at least $x^{EB+(1-B+B_1)(\gamma-1)-\varepsilon}$ Carmichael numbers up to $x$ composed solely of primes from
$\mathscr{N}_{\alpha, \beta}^{(c)}$.
\end{lemma}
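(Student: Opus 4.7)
The plan is to import the entire machinery of \cite[Section~4]{Guo-Qi-2021} wholesale and merely replace the two numerical inputs. The framework is the Alford--Granville--Pomerance (AGP) construction of Carmichael numbers, refined by Baker--Harman and the Maier--Pomerance sieve, adapted so that the prime ingredients are forced to lie in the generalized Piatetski--Shapiro set $\mathscr{N}_{\alpha,\beta}^{(c)}$. The single analytic input needed from the first half of the paper is the asymptotic of Theorem~\ref{thm:3} for $\vartheta_{\alpha,\beta,c}(x;q,a)$, which is uniform in $q$ up to a power of $x$; everything else in the AGP scaffolding is purely combinatorial and carries over unchanged.

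First I would fix $E\in\mathcal{E}$, the positive parameters $B,B_1$ with $B_1<B<-\tfrac{11}{26}+\tfrac{6\gamma}{13}$, and a small $\varepsilon>0$. Following AGP, choose a highly composite, smooth modulus $L$ of size roughly $x^{B}$, built from primes $q$ lying in a suitable range so that $L$ has an abundance of divisors $d\le x^{B}$ with each $d=p-1$ for some prime $p\le x$ whose largest prime factor is at most $x^{1-E}$ (this is where the hypothesis $E\in\mathcal{E}$ enters). Next, for each divisor $d\mid L$ with $d\le x^{B}$, invoke Theorem~\ref{thm:3} with $q=d$, $a=1$ to count Piatetski--Shapiro primes $p\le y$ satisfying $p\equiv 1\pmod d$; comparing the main term $\alpha^{-\gamma}\gamma y^{\gamma-1}\vartheta(y;d,1)$ against the error $y^{7\gamma/13+11/26+\varepsilon}$ gives a true asymptotic provided $d\le y^{-11/26+6\gamma/13-\varepsilon}$, which is exactly the role played by the inequality $B<-\tfrac{11}{26}+\tfrac{6\gamma}{13}$ built into the statement. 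This produces a dense set $\mathcal{P}\subset\mathscr{N}_{\alpha,\beta}^{(c)}$ of primes $p\equiv 1\pmod d$ for many $d\mid L$.

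Having populated $\mathcal{P}$, I would apply Korselt's criterion exactly as in \cite[Section~4]{Guo-Qi-2021}: for sufficiently many subsets $T\subseteq\mathcal{P}$ with $\prod_{p\in T}p\equiv 1\pmod L$, the product is a Carmichael number composed solely of primes of $\mathscr{N}_{\alpha,\beta}^{(c)}$. The subset-counting lemma of AGP (combined with the lower bound on $|\mathcal{P}|$ just derived) then produces at least $x^{EB+(1-B+B_1)(\gamma-1)-\varepsilon}$ such Carmichael numbers below $x$, which is precisely the claim. The single bookkeeping adjustment relative to Guo--Qi is to carry the new exponent $\tfrac{7\gamma}{13}+\tfrac{11}{26}$ through the chain of inequalities; this is what forces the admissible range to shrink to $c\in\bigl(1,\tfrac{38}{37}\bigr)$ (the threshold at which $-\tfrac{11}{26}+\tfrac{6\gamma}{13}$ first becomes positive).

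The main obstacle, and indeed the only nontrivial step, is verifying that the error term of Theorem~\ref{thm:3} is uniformly admissible over the full range of moduli $d\le x^{B}$ appearing in the AGP construction; once that uniformity is established, the rest is a verbatim transcription of \cite[Section~4]{Guo-Qi-2021} with the numerical substitutions indicated in the excerpt. I would therefore devote the bulk of the write-up to justifying the uniformity and explicitly identifying where the constants $B,B_1,E$ enter, and simply refer the reader to \cite{Guo-Qi-2021} for the combinatorial subset-counting and the Korselt verification.
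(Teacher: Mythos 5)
Your proposal matches the paper's treatment: the paper establishes Lemma~\ref{lem:fin} precisely by invoking the argument of \cite[Section~4]{Guo-Qi-2021} verbatim, with the Type~I/II exponent $-\frac{13}{35}+\frac{2\gamma}{5}$ replaced by $-\frac{11}{26}+\frac{6\gamma}{13}$ coming from Theorem~\ref{thm:3}, which is exactly the substitution and uniformity-in-$q$ check you describe. One small slip worth noting: $-\frac{11}{26}+\frac{6\gamma}{13}>0$ already for $\gamma>\frac{11}{12}$, i.e.\ $c<\frac{12}{11}$, so the hypothesis $c<\frac{38}{37}$ is not the positivity threshold of that quantity but is inherited from the Guo--Qi/AGP framework; this does not affect the correctness of your argument.
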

Taking $B$ and $B_1$ arbitrarily close to $-\frac{11}{26}+\frac{6\gamma}{13}$, Lemma \ref{lem:fin} implies that there are infinitely many Carmichael numbers composed entirely of the primes from $\mathscr{N}_{\alpha, \beta}^{(c)}$ with
\begin{equation*}
\bigg(-\frac{11}{26}+\frac{6\gamma}{13}\bigg) E + \gamma -1 > 0.
\end{equation*}
Taking $E= 0.7039$ from \cite{BaHa2}, we eventually have $\gamma > \frac{18746}{19137}$.

\section{Acknowledgement}
The authors would like to appreciate the referee for his/her patience in refereeing this paper.
This work is supported by the National Natural Science Foundation of China (Grants No. 11901566, 12001047, 11971476, 12071238, 11971381, 11701447, 11871317, 11971382), the Fundamental Research Funds for the Central Universities (Grant No. 2021YQLX02), the National Training Program of Innovation and Entrepreneurship for Undergraduates (Grant No. 202107010), the Undergraduate Education and Teaching Reform and Research Project for China University of Mining and Technology (Beijing) (Grant No. J210703), and the Scientific Research Funds of Beijing Information Science and Technology University (Grant No. 2025035).

\end{document}